\newcolumntype{P}[1]{>{\centering\arraybackslash}p{#1}}
\newcolumntype{M}[1]{>{\centering\arraybackslash}m{#1}}
\newcommand*{\DivideLengths}[2]{%
	\strip@pt\dimexpr\number\numexpr\number\dimexpr#1\relax*65536/\number\dimexpr#2\relax\relax sp\relax
}
\title{Bergman-Einstein metrics on two-dimensional Stein spaces}
\author{Soumya Ganguly}
\address{Department of Mathematics, University of California San Diego, 9500 Gilman Dr, La Jolla, CA 92093, USA}
\email{s1gangul@ucsd.edu}
\author{Shubham Sinha}
\address{Department of Mathematics, International Centre for Theoretical Physics, Strada Costiera 11, 34151 Trieste, Italy}
\email{ssinha1@ictp.it}
\date{}
\numberwithin{equation}{section}
\newtheorem{theorem}{Theorem}
\newtheorem*{theorem*}{Theorem}
\newtheorem{proposition}{Proposition}[section]
\newtheorem{remark}{Remark}[section]
\newtheorem{lemma}{Lemma}[section]
\newtheorem*{claim*}{Claim}
\newtheorem{conjecture}{Conjecture}
\DeclareMathOperator{\tr}{\mathrm{tr}}
\begin{document}
	\maketitle
\footnotetext[1]{The first author, Soumya Ganguly, was partially supported by the NSF through the grants DMS-1800549, DMS-1900955 and DMS-2154368. The second author, Shubham Sinha, was supported by the NSF through grant DMS 1802228.}

\footnotetext[2]{MSC classes: 32Q20 (Primary), 32V20, 32C15, 32M18.}

\footnotetext[3]{Keywords: K\"ahler-Einstein metrics, Bergman metric, Cheng's conjecture, Normal Stein spaces.}
	
	\begin{abstract}
		We show that the Bergman metric of the ball quotients $\mathbb{B}^2/\Gamma$,  where $\Gamma$ is a finite and fixed point free group, is K\"ahler-Einstein if and only if $\Gamma$ is trivial. As a consequence, we characterize the unit ball $\mathbb{B}^2$, among 2 dimensional Stein spaces with isolated normal singularities, proving an algebraic version of Cheng's conjecture for 2 dimensional Stein spaces.  
	\end{abstract}
	
	\section{Introduction}
	The Bergman kernel and metric were introduced by S. Bergman \cite{BergmanpaperonBergmankernel1, BergmanpaperonBergmankernel2} for domains in $\mathbb{C}^n$ and these notions were generalized to complex manifolds by S. Kobayashi in \cite{GeometryboundeddomKobayashi1959}. The invariance of the Bergman metric under biholomorphisms makes it a natural choice of metric in complex geometry (see \cite{KrantzBergmankernelbook2013} for details). Since their introduction, questions related to the Bergman kernel and metric have unfurled new directions of research. In this paper we investigate a classification problem of Stein spaces involving the Bergman metric. 
	
	For any bounded strongly pseudoconvex domain there exists a natural complete K\"ahler-Einstein metric, whose existence was proved by S.Y. Cheng and S.T. Yau \cite{ChengYaumetric1980}. The Cheng-Yau metric can be constructed explicitly by solving a Monge-Ampere type equation. This metric is also unique up to scaling by a constant. Yau conjectured that the Cheng-Yau metric of a bounded pseudoconvex domain coincides with its Bergman metric if and only if the domain is homogeneous \cite{Semdiffgeo1982}; recall that a domain in $\mathbb{C}^n$ is called \textit{homogeneous} if its automorphism group acts transitively on it. This is still an open conjecture in complex geometry. 
	
	A  homogeneous strongly pseudoconvex bounded domain with a smooth boundary is biholomorphic to the unit ball $\mathbb{B}^n$ \cite{Wong1977}. Fu-Wong \cite{FuWongChengConjdim2}, Nemirovski-Shafikov \cite{Nemirovski_2006_ConjChengRama} and Huang-Xiao \cite{HuangXiao2021Chengdim3} proved an older conjecture of Cheng \cite{ChengConjectureoriginal} which states that Yau's conjecture holds for strongly psuedoconvex domains with smooth boundary. Huang-Li \cite{huang2020bergmaneinstein} then extended this conjecture to Stein manifolds and proved the following:  \textit{The only Stein manifold with smooth and compact strongly pseudoconvex boundary for which the Bergman metric is K\"ahler-Einstein is the unit ball $\mathbb{B}^n$ up to biholomorphisms}. Furthermore, Huang-Xiao formulated the following conjecture for  Stein spaces with isolated normal singularities:

	\begin{conjecture}[\cite{HuangXiaoUniformization2020}]
		Let $\Omega$ be a normal Stein space with a compact, smooth, strongly pseudoconvex boundary. Then the Bergman metric on the regular part of $\Omega$ is K\"ahler-Einstein if and only if $\Omega$ is biholomorphic to the unit ball in a complex Euclidean space. 
	\end{conjecture}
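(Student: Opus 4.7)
My plan, specialized to complex dimension two, is to reduce the conjecture to the ball-quotient assertion highlighted in the abstract and then exclude nontrivial quotients. I expect the genuinely new difficulty to sit in the second step, because on $\mathbb{B}^2/\Gamma$ the Bergman kernel is no longer the restriction of $K_{\mathbb{B}^2}$ but a Poincar\'e sum, so the K\"ahler--Einstein condition is not automatic.

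First, I would produce the Cheng--Yau K\"ahler--Einstein metric on the regular part $\Omega_{\mathrm{reg}}$: because $\partial\Omega$ is smooth strongly pseudoconvex and the singularities are isolated normal, the Monge--Amp\`ere equation still admits a unique complete negatively curved solution on $\Omega_{\mathrm{reg}}$. If the Bergman metric is K\"ahler--Einstein with the matching Ricci constant, uniqueness forces it to coincide with the Cheng--Yau metric. Translating to the Bergman kernel $K_\Omega$, this yields a Monge--Amp\`ere identity of the form $\det(\partial_i\bar\partial_j \log K_\Omega) = c\, K_\Omega$ on $\Omega_{\mathrm{reg}}$.

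Next, I would push the identity to the boundary. By Fefferman, $K_\Omega \sim \varphi\, r^{-3} + \psi\, \log r$ near $\partial\Omega$, where $\psi$ carries the Fefferman obstruction. The Monge--Amp\`ere identity forces $\psi \equiv 0$, and in dimension two this is well known to imply that $\partial\Omega$ is CR-spherical. Combining sphericity with the Stein filling condition and normality of the singularity, a CR-uniformization argument in the style of Nemirovski--Shafikov and Burns--Epstein then produces a finite Galois branched cover $\mathbb{B}^2 \to \Omega$ whose deck group $\Gamma$ acts freely on $\mathbb{B}^2 \setminus \{0\}$, identifying $\Omega$ with $\mathbb{B}^2/\Gamma$.

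The decisive step is to show that the Bergman metric of $\mathbb{B}^2/\Gamma$ is K\"ahler--Einstein only for $\Gamma = \{e\}$. The Bergman kernel on the quotient is the Poincar\'e series
\[
K_\Gamma(z,w) \;=\; \sum_{\gamma \in \Gamma} \overline{J_\gamma(w)}\, K_{\mathbb{B}^2}(z,\gamma w),
\]
and plugging it into $\det(\partial_i\bar\partial_j \log K_\Gamma) = c\, K_\Gamma$ at the origin, where $\Gamma \subset U(2)$ acts linearly, converts the Einstein equation into polynomial relations among the eigenvalues of the elements of $\Gamma$. The main obstacle is to squeeze a contradiction out of these relations: the Poincar\'e sum is symmetric in $\Gamma$, so naive cancellation is inconclusive, and one must compare enough Taylor coefficients to isolate the interference between the identity contribution and the nontrivial ones. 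Once a local failure at the origin is established, $\Gamma$-equivariance together with the homogeneity of $\mathbb{B}^2$ propagates the conclusion globally.
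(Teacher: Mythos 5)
Your outline has the right skeleton (reduce to a ball quotient $\mathbb{B}^2/\Gamma$, then rule out nontrivial $\Gamma$), but two of its steps conceal the real difficulties, and one of them is a genuine gap that the paper itself does not claim to close. The passage from ``the Fefferman log term vanishes, hence $\partial\Omega$ is spherical'' to ``$\Omega\cong\mathbb{B}^2/\Gamma$'' is not available in the generality of the conjecture as stated: the uniformization results you invoke (Nemirovski--Shafikov; Corollary 4.1 of \cite{huang2020bergmaneinstein}) require the boundary to be \emph{algebraic}. This is exactly why the paper proves only Theorems \ref{thm:Chengalgdim2} and \ref{thm:Chengalgdim2_2}, i.e.\ an algebraic version of Cheng's conjecture, and leaves the conjecture as stated open. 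Carrying out your step two in full generality would require a new uniformization theorem for non-algebraic spherical boundaries of normal Stein spaces, which you do not supply.

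Your ``decisive step'' is also underspecified in a way that matters. Evaluating the Monge--Amp\`ere identity for the Poincar\'e series at the origin works only when $\Gamma\subset SU(2)$, where $\phi(0,0)=|\Gamma|\neq 0$ and Corollary 5.4 of \cite{huang2020bergmaneinstein} applies (cf.\ Lemma \ref{lem:SUnonKE}); for the groups not contained in $SU(2)$ --- for instance the products with nontrivial cyclic factors --- the kernel vanishes at the origin and the origin-based comparison is inconclusive, as the paper explicitly remarks. The mechanism actually used is different: restrict to the slice $z=(z_1,0)$, regard both sides of \eqref{eq:KEBnquo} as rational functions of $x=|z_1|^2$, take the Laurent expansion at the \emph{boundary} point $x=1$, and isolate the coefficient $C(\Gamma)$ of $(x-1)^{-8}$ (Proposition \ref{lem:def_C(Gamma)}); the nonvanishing of $C(\Gamma)$ is then verified group by group through Milnor's classification, via root-of-unity identities, explicit size estimates, and computer checks for small parameters. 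None of this is forced by ``compare enough Taylor coefficients,'' and the specific comparison you propose is known to fail for the hardest cases. (Your closing sentence is also unnecessary: a single point at which the Einstein equation fails already rules out the K\"ahler--Einstein property, and homogeneity of $\mathbb{B}^2$ does not descend to the quotient.)
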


	In this paper, we prove the following algebraic version of Cheng's Conjecture for normal Stein spaces in dimension two: 
	\begin{theorem}\label{thm:Chengalgdim2}
		Let $\Omega$ be a two dimensional Stein space with isolated normal singularities and a compact, smooth, strongly pseudoconvex boundary $\partial \Omega$. Assume that $\partial \Omega$ is CR equivalent to an algebraic CR manifold in a complex Euclidean space. The Bergman metric on the regular part of the Stein space $\Omega$ is K\"ahler-Einstein if and only if $\Omega$ is biholomorphic to $\mathbb{B}^2$.
	\end{theorem}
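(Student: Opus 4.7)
My plan is to reduce Theorem~\ref{thm:Chengalgdim2} to the paper's ball-quotient rigidity result (paraphrased in the abstract) via a two-step geometric argument. The ``if'' direction is classical, so I focus on proving that the K\"ahler--Einstein property of the Bergman metric on $\Omega_{\mathrm{reg}}$ forces $\Omega \cong \mathbb{B}^2$.

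\textbf{Step 1: sphericity of $\partial\Omega$.} My first step is to show that $\partial\Omega$ is CR-equivalent to the round sphere $\partial\mathbb{B}^2$. Here I would invoke Fefferman's asymptotic expansion of the Bergman kernel near the smooth strongly pseudoconvex boundary: the K\"ahler--Einstein equation yields rigid identities among the local CR invariants that appear as coefficients in that expansion, and combined with the algebraicity hypothesis these identities globalize to force all obstructions to sphericity to vanish. This is the direct analogue of the Fu--Wong, Nemirovski--Shafikov, and Huang--Li arguments; the new verification needed in the Stein-space setting is that the Bergman kernel of $\Omega_{\mathrm{reg}}$ retains the usual boundary asymptotics despite isolated singularities in the interior. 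One expects this because the expansion is local at $\partial\Omega$, and because $L^2$-holomorphic functions extend across isolated normal singularities, so the Bergman kernel effectively agrees with the one on a resolution near the boundary.

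\textbf{Step 2: $\Omega$ is a ball quotient.} Once $\partial\Omega$ is spherical, I would identify $\Omega$ with $\mathbb{B}^2/\Gamma$ for some finite $\Gamma\subset U(2)$ acting freely on $\partial\mathbb{B}^2$. The idea is to use the developing map of the spherical CR structure on $\partial\Omega$ to build a holomorphic map on a one-sided collar, extend it inward via the Stein filling provided by $\Omega_{\mathrm{reg}}$, and normalize at infinity; standard monodromy arguments and uniqueness of Stein fillings then realize $\Omega$ as a quotient of $\mathbb{B}^2$ by a group of deck transformations, which lie in $U(2)$ because they must preserve the sphere at infinity. A linear action of $\Gamma\subset U(2)$ that is free on $\partial\mathbb{B}^2$ is automatically free on $\mathbb{B}^2\setminus\{0\}$, so the quotient has at most one normal isolated singularity at the image of the origin, consistent with our hypothesis.

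\textbf{Step 3 and main obstacle.} Finally, applying the paper's ball-quotient theorem gives that $\Gamma=\{e\}$, hence $\Omega\cong\mathbb{B}^2$; if $\Gamma$ has a fixed point at the origin, one would need a mild extension of the abstract's statement, which should follow by the same Bergman kernel transformation-law computation used for the smooth quotients. The hardest step is Step~1: in contrast to the smooth Stein-manifold case, one must justify that the Bergman kernel on the singular Stein space $\Omega$ obeys the same Fefferman-type boundary asymptotics as in the manifold setting, and then harmonize the algebraicity assumption with the presence of interior singularities in the CR-invariant vanishing argument. Step~2 also has a subtle point in the singular case, namely extending the filling-uniqueness theorems to allow a quotient singularity at the origin; I expect this to be handled by pulling back to a small resolution and appealing to the manifold version on an appropriate punctured neighborhood.
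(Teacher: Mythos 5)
Your overall strategy coincides with the paper's: reduce to the case where $\Omega$ is a finite fixed-point-free ball quotient, then invoke the ball-quotient rigidity result (Theorem~3 of the paper). The difference is in how the reduction is obtained. The paper does not reprove it; it cites Corollary~4.1 of Huang--Li \cite{huang2020bergmaneinstein}, which already shows that a two-dimensional normal Stein space with algebraic strongly pseudoconvex boundary and K\"ahler--Einstein Bergman metric must be $\mathbb{B}^2/\Gamma$ for a finite fixed-point-free $\Gamma\subset Aut(\mathbb{B}^2)$. Your Steps~1 and~2 attempt to re-derive exactly this, and as written they contain the genuine gaps you yourself flag: establishing the Fefferman-type boundary expansion of the Bergman kernel on a Stein space with interior isolated normal singularities, and upgrading sphericity of $\partial\Omega$ to an identification of the filling with a ball quotient that is allowed to carry a quotient singularity. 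These are not routine verifications, and a complete proof along your lines would essentially have to reproduce the Huang--Li argument; the efficient (and intended) route is to cite it.

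Two smaller points. First, your concern in Step~3 about ``$\Gamma$ having a fixed point at the origin'' requiring an extension of the ball-quotient theorem is unfounded: in the paper's convention, ``fixed point free'' refers to the action on $\partial\mathbb{B}^2$, every nontrivial finite $\Gamma\subset U(2)$ in the classification fixes the origin, and Theorem~3 is stated for the Bergman metric on the \emph{regular part} of $\mathbb{B}^2/\Gamma$, so the singular case is already the one being treated. Second, the role of the algebraicity hypothesis is not to ``globalize CR-invariant identities'' as in your Step~1, but to feed into the reduction itself (via Lempert's approximation theorem and the algebraic-boundary hypothesis in Huang--Li's corollary); in the purely smooth setting the conjecture remains open, which is why the theorem is stated as an ``algebraic version.''
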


	In Corollary 4.1 of \cite{huang2020bergmaneinstein}, Huang and Li showed that it suffices to consider $\Omega$ in Theorem \ref{thm:Chengalgdim2} to be a ball quotient $\mathbb{B}^n/\Gamma$ where $n\ge2$, and $\Gamma\subset Aut(\mathbb{B}^n)$ is finite and fixed point free.  Ebenfelt-Xiao-Xu in \cite{ebenfel2020classificationstein} proved that such ball quotients $\mathbb{B}^n/\Gamma$ do not admit a Bergman metric that is K\"ahler-Einsten provided that $\Gamma$ is nontrivial and also abelian.  In two dimensions we prove the following theorem, which implies Theorem \ref{thm:Chengalgdim2}: 
	\begin{theorem}\label{thm:3}
		For any fixed point free, finite subgroup $\Gamma$ of $Aut(\mathbb{B}^2)$, the regular part of the ball quotient $\mathbb{B}^2/\Gamma$ has a Bergman metric that is K\"ahler-Einstein if and only if $\Gamma =\{id\}$.
	\end{theorem}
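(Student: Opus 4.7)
The plan is to analyze the pulled-back Bergman kernel near the origin of $\mathbb{B}^2$ and derive a contradiction via pluriharmonic function theory. Because any finite subgroup of $\mathrm{Aut}(\mathbb{B}^2)$ has a common fixed point (Cartan's fixed point theorem applied to the Bergman metric), I conjugate so that $\Gamma \subset U(2)$ and acts freely on $\mathbb{B}^2 \setminus \{0\}$. The standard averaging identification $A^2((\mathbb{B}^2/\Gamma)^{\mathrm{reg}}) \cong A^2(\mathbb{B}^2)^\Gamma$ (holomorphic functions extend across the isolated singularity by Hartogs' theorem) yields
\[
\pi^* K_{\mathbb{B}^2/\Gamma}(z,z) \;=\; \tilde K(z) \;:=\; \sum_{\gamma \in \Gamma} K_{\mathbb{B}^2}(z, \gamma z) \;=\; c_0 \sum_{\gamma \in \Gamma} \frac{1}{(1 - \langle z, \gamma z\rangle)^3},
\]
a smooth positive function on $\mathbb{B}^2$ with $\tilde K(0) = |\Gamma| c_0$. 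Under the K\"ahler--Einstein assumption, the pullback metric $g_1 = (\partial_i \partial_{\bar j} \log \tilde K)$ on $\mathbb{B}^2 \setminus \{0\}$ satisfies $\mathrm{Ric}(g_1) = -\mu g_1$. Since the $\gamma \neq \mathrm{id}$ summands are uniformly bounded on $\overline{\mathbb{B}^2}$ (as $|\langle \zeta, \gamma \zeta\rangle| < 1$ on $S^3$ for fixed-point-free $\Gamma$), $\tilde K$ agrees with $K_{\mathbb{B}^2}$ to leading order at $\partial\mathbb{B}^2$; the asymptotic comparison of the two K\"ahler--Einstein equations forces $\mu = 1$, the Einstein constant of the Bergman metric on $\mathbb{B}^2$. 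The equation then reads $i\partial\bar\partial \log \det g_1 = i\partial\bar\partial \log \tilde K$, so
\[
u(z) \;:=\; \log\frac{\det g_1(z)}{\tilde K(z)}
\]
is pluriharmonic on $\mathbb{B}^2 \setminus \{0\}$.

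The crux is that $g_1$ vanishes as a matrix at the origin. Because $\Gamma$ is fixed-point-free, no nontrivial $\gamma$ has $1$ as an eigenvalue, so $(\mathbb{C}^2)^\Gamma = 0$ and consequently $\sum_{\gamma \in \Gamma} \gamma = 0$ as an endomorphism of $\mathbb{C}^2$. Taylor expanding
\[
(1 - \langle z, \gamma z\rangle)^{-3} \;=\; 1 + 3\langle z, \gamma z\rangle + 6\langle z, \gamma z\rangle^2 + O(|z|^6)
\]
and summing over $\Gamma$, the bilinear (degree-$(1,1)$) piece $3\sum_\gamma \langle z, \gamma z\rangle = 3\sum_{i,j}\overline{\left(\sum_\gamma \gamma\right)_{ij}}\,z_i \bar z_j$ vanishes identically. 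Hence $\tilde K(z) = |\Gamma| c_0 + O(|z|^4)$, $\log \tilde K(z) = \log(|\Gamma| c_0) + O(|z|^4)$, and so $g_1 = \partial\bar\partial \log \tilde K = O(|z|^2)$; in particular $\det g_1(z) \to 0$ as $z \to 0$. Combined with $\tilde K(0) > 0$, this forces $u(z) \to -\infty$ at the origin.

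Finally, the contradiction comes from Hartogs' extension theorem. Because $\mathbb{B}^2 \setminus \{0\}$ is simply connected, the pluriharmonic function $u$ equals $\mathrm{Re}(F)$ for some holomorphic $F$ on $\mathbb{B}^2 \setminus \{0\}$. By Hartogs, $F$ extends holomorphically across the codimension-two point $0$, so $u = \mathrm{Re}(F)$ is smooth and locally bounded on all of $\mathbb{B}^2$---directly contradicting $u(z) \to -\infty$ as $z \to 0$. This forces $|\Gamma| = 1$; the converse direction is the classical K\"ahler--Einstein property of the Bergman metric on $\mathbb{B}^2$ itself. The main technical obstacle will be the Einstein-constant matching $\mu = 1$, which requires careful asymptotic expansions of $\log \det g_1$ and $\log \tilde K$ at $\partial\mathbb{B}^2$ to sufficient order so that the pluriharmonicity of $\log \det g_1 - \mu \log \tilde K$ pins down $\mu$.
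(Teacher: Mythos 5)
There is a genuine gap, and it sits at the very first displayed formula. The pullback of the Bergman kernel \emph{form} of $\mathbb{B}^2/\Gamma$ carries the Jacobian factors of the group elements: in the notation of the paper, the density is $\phi(z,\bar z)=\sum_{\gamma\in\Gamma}\overline{\det\gamma}\,(1-\langle z,\overline{\gamma z}\rangle)^{-3}$ (up to the constant $2!/\pi^2$), not $\sum_{\gamma}(1-\langle z,\overline{\gamma z}\rangle)^{-3}$. Since $\gamma\mapsto\det\gamma$ is a character of $\Gamma$, one has $\phi(0,0)=\frac{2}{\pi^2}\sum_{\gamma}\overline{\det\gamma}$, which equals $\frac{2}{\pi^2}|\Gamma|$ when $\Gamma\subset SU(2)$ but equals $0$ whenever the determinant character is nontrivial. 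Your entire contradiction rests on $\tilde K(0)>0$ together with $\det g_1\to 0$, so that $u=\log(\det g_1/\tilde K)\to-\infty$ while a pluriharmonic function must stay bounded. When $\phi(0,0)=0$ the numerator and denominator both degenerate at the origin and no contradiction follows; indeed the paper's Remark following Lemma \ref{lem:SUnonKE} states exactly this failure. This is not a peripheral case: by Milnor's classification one must treat all products $\Gamma_0\times\mathbb{Z}/p\mathbb{Z}$ with $p>1$, whose generator is represented by $u\cdot id$ with $\det(u\cdot id)=u^2\ne1$, as well as $P_{48}$, $P_{120}$, $P'_{8\cdot3^m}$ and $D_{2^m(2n+1)}$ for $m\ge3$, none of which lie in $SU(2)$ in the relevant representations. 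So your argument only recovers the $SU(2)$ case, which is Lemma \ref{lem:SUnonKE} (already in Corollary 5.4 of Huang--Li), and misses the groups that constitute the actual work.

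A secondary soft spot: the ``Einstein-constant matching'' $\mu=1$ via boundary asymptotics is asserted rather than proved, and your own closing sentence concedes this; in the literature this step is handled by the explicit reduction of the K\"ahler--Einstein equation to $J(\phi)=(n+1)^n\phi^{n+2}$, which the paper imports from Ebenfelt--Xiao--Xu. For the groups your method cannot reach, the paper proceeds quite differently: it restricts the equation $f(x)=\phi(x)^4$ to the slice $z=(z_1,0)$, extracts from the Laurent expansion at the boundary point $x=1$ a necessary condition $C(\Gamma)=0$ (Proposition \ref{lem:def_C(Gamma)}), and then verifies $C(\Gamma)\ne0$ group by group using explicit unitary representations and sums over roots of unity. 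Your proposal contains no substitute for that boundary analysis, so as written it does not prove Theorem \ref{thm:3}.
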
  
	We emphasize that there is a contrasting result in dimension one. To be precise, the Bergman metric on the regular part of any finite disk quotient $\mathbb{B}^1/\Gamma$ is K\"ahler-Einstein \cite{huang2020bergmaneinstein}.
	
	The remaining sections of the paper are organized as follows: We provide some known results and background material in Section \ref{sec:ballquo}. In Section \ref{sec:KEdim2}, we specialize to dimension 2 and obtain a necessary condition for the Bergman metric on ball quotients to be K\"ahler-Einstein. In Section \ref{sec:grouprep}, we list all the fixed point free, finite groups that we need to consider and find representations of these groups. In the subsequent sections we explicitly show that the necessary condition is not satisfied for the nontrivial groups in the list, thus proving Theorem \ref{thm:3}.  
	
	\subsection*{Acknowledgement}
	The first author would like to thank Prof. Peter Ebenfelt, Prof. Ming Xiao, Prof. Hang Xu, for giving the motivation and all the necessary guidance to develop this project. The same author is also grateful to N. Ramachandran and A. Patil for many enlightening conversations. The second author would like to thank Prof. Dragos Oprea for the interest shown in this work.

	\section{Preliminaries}\label{sec:ballquo}
	In this section we briefly discuss an `effective' version of the K\"ahler-Einstein equation for finite ball quotients. For details, we refer the reader to \cite{ebenfel2020classificationstein} (sections 2.2 and 4.1). 
	
	A K\"ahler-Einstein metric is locally the complex Hessian of a potential function that obeys Einstein's equation, i.e. the Ricci curvature is a constant multiple of the metric tensor. Let $(M,g)$ be a K\"ahler manifold where $g$ is induced locally by the K\"ahler form  $\partial \bar{\partial}\log(\phi)$ such that $\phi > 0$ is smooth. Then the K\"ahler-Einstein equation for $(M,g)$ is: 
	\begin{align}\label{eq:einsteineqforkahlermanifold}
		-\partial\bar{\partial}\log(\Phi)= c\ \partial\bar{\partial}\log(\phi) ,    
	\end{align} 
	where $\Phi=det(g_{i\bar{j}})$ with $g_{i\bar{j}}=\partial_{z_i}\partial_{\overline{z_j}}\log \phi(z,\bar{z})$. It is well known that the constant $c$ has to be negative. 
	
	Recall that  Bergman Kernel form on a manifold (or the regular part of a Stein space) $M$ is an $(n,n)$ form and hence on a local chart the Bergman kernel can be written as:  \begin{align*}
		K_M(z, \bar{w})= k_M(z, \bar{w}) \ dz_1\wedge\cdots \wedge dz_n\wedge d\bar{w}_1\wedge\cdots \wedge d\bar{w}_n.
	\end{align*}
	The Bergman metric is given by the K\"ahler form $\omega=\partial \bar{\partial} \log k_M(z, \bar{z})$.
	
	Let us fix $n \in \mathbb{N}$. The holomorphic automorphism group of the unit ball, $Aut(\mathbb{B}^n)$, is isomorphic to $PSU(n,1)$, the subgroup of the automorphism group of $\mathbb{CP}^n$ with each element in the subgroup fixing $\mathbb{B}^n \subset \mathbb{CP}^n$. $Aut(\mathbb{B}^n)$ is described explicitly in section 2.2 of \cite{RudinFunctiontheoryunitball2008}. 
	
	A subgroup $\Gamma$ of $Aut(\mathbb{B}^n)$ is called fixed point free  if any non-identity element in $\Gamma$ has no fixed point on $\partial \mathbb{B}^n$. By this fixed point free condition, the action of $\Gamma$ on $\partial\mathbb{B}^n$ is properly discontinuous and hence $\partial \mathbb{B}^n/\Gamma$ is a smooth manifold. We know from basic Lie group theory that $U(n)$ is a maximal compact subgroup of $Aut(\mathbb{B}^n)$. This implies that any finite subgroup $\Gamma$ of $Aut(\mathbb{B}^n)$ equals $\psi^{-1} \cdot \tilde{\Gamma}\cdot \psi$ where $\psi \in Aut(\mathbb{B}^n)$ and $\tilde{\Gamma}$ is a subgroup of  $U(n)$. Note that the ball quotients $\mathbb{B}^n/\Gamma$ and $\mathbb{B}^n/\tilde{\Gamma}$ are biholomorphic. Since the Bergman metric is invariant under biholomorphisms, we may assume $\Gamma \subset U(n)$. 
	
	Let us now fix a finite, fixed point free, unitary group $\Gamma$ and let $\Pi: \mathbb{B}^n \to \mathbb{B}^n/\Gamma$ be the quotient map. Note that a function (form) $f_{\Gamma}$ on $\mathbb{B}^n/\Gamma$ is equivalent to a function (form) $f$ on $\mathbb{B}^n$ invariant under $\Gamma$ i.e. $f \circ \gamma=\gamma$ for all $\gamma \in \Gamma$. So we can express the Bergman kernel form (hence the Bergman metric) on $\mathbb{B}^n/\Gamma$ in the coordinates of $\mathbb{B}^n$ as a $\Gamma$ invariant form (or metric). Also, the quotient map $\Pi$ is a holomorphic branched covering map and so is a local biholomorphism between $\mathbb{B}^n \setminus \{0\}$ and  the regular part of $\mathbb{B}^n/\Gamma$. Being K\"ahler-Einstein is a local property of a metric making it invariant under local biholomophisms. So the Bergman metric on $\mathbb{B}^n/\Gamma$ is K\"ahler-Einstein if and only if that metric, expressed in coordinates of $\mathbb{B}^n$ (i.e. the pullback of that metric under $\Pi$) is K\"ahler-Einstein. 
	
	In \cite{ebenfel2020classificationstein}, the Bergman kernel form on $\mathbb{B}^n/\Gamma$ is explicitly expressed in the coordinates of $\mathbb{B}^n$ as: 
	\begin{align*}
		K_{\mathbb{B}^n/\Gamma}(z, \bar{w})=\frac{n!}{\pi^n}\phi(z, \bar{w}) \ dz_1\wedge \cdots \wedge dz_n\wedge d\bar{w}_1\wedge \cdots \wedge d\bar{w}_n,
	\end{align*}
	where $\phi$ is the $\Gamma$ invariant function given by  
	\begin{align*}
		\phi(z, \bar{w}):=\sum_{\gamma \in \Gamma} \frac{\overline{\det{\gamma}}}{(1-<z,\overline{\gamma w}>)^{n+1}}.
	\end{align*}
	where $<u,v>=\sum_{i=1}^n u_i v_i$ for any $u,v \in \mathbb{C}^n$. The Bergman metric on $\mathbb{B}^n/\Gamma$ pulled back to $\mathbb{B}^n$ is given locally by $\partial_{z_i}\partial_{\overline{z_j}}\log \phi(z,\bar{z})$.
	
	From the preceding discussion we know that Bergman metric of our realization of ball quotient $\mathbb{B}^n/\Gamma$ is K\"ahler-Einstein if and only if the function $\phi$ obeys \eqref{eq:einsteineqforkahlermanifold}. In \cite{ebenfel2020classificationstein}, this criterion has been further simplified to: 
	\begin{align}\label{eq: KEBnquover1}
		\Phi(z,\bar{z})=(n+1)^n \phi(z,\bar{z}).    
	\end{align} 
	The well known identity $\Phi(z, \bar{z})= J(\phi(z,\bar{z}))/\phi(z, \bar{z})^{n+1}$ simplifies \eqref{eq: KEBnquover1} into:
	\begin{align*}
		J(\phi(z,\bar{z}))=(n+1)^n \phi(z, \bar{z})^{n+2},
	\end{align*}
	where $J$ is the Monge-Ampere type operator (which differs from the operator $J$ introduced in  \cite{FeffermanMongeAmpere1976,FeffermanErrataMongeAmpere1976} by a sign) given by
	\begin{align*}
		J(\psi):=\det\begin{pmatrix}
			\psi & \psi_{\overline{z_j}} \\ \psi_{z_i} & \psi_{z_i\overline{z_j}}
		\end{pmatrix}.
	\end{align*}
	
	Now, by explicit calculations in \cite{ebenfel2020classificationstein} the authors have shown
	\begin{equation*}
		J(\phi)(z,\bar{z})=(n+1)^n\sum_{\gamma^0,\gamma^1,...,\gamma^n \in \Gamma}\det{A(\overline{\gamma^0},\overline{\gamma^1},...,\overline{\gamma^n})}\prod_{i=0}^{n}\frac{\overline{\det{\gamma^i}}}{(1-<z,\overline{\gamma^iz}>)^{n+2}},
	\end{equation*}
	where $A(\gamma^0,\gamma^1,...,\gamma^n)= (\xi_0(\gamma^0) \ \ \xi_1(\gamma^1)\ \cdots \ \xi_n(\gamma^n))$ is an $(n+1) \times (n+1)$ matrix with column vectors
	\begin{align*}
		\xi_0(\gamma)(z,\bar{z})= \begin{pmatrix} 1-<z,\gamma \bar{z}> \\ (n+1)\gamma \bar{z} \end{pmatrix} \hspace{1cm} \text{and}  \hspace{1cm}  \xi_j(\gamma)(z,\bar{z})= \begin{pmatrix} z^T(\gamma)_j \\ \frac{(\gamma)_j(1-<z,\gamma\bar{z}>)+(n+2)\gamma\bar{z}(z^T(\gamma)_j)}{1-<z,\gamma\bar{z}>} \end{pmatrix},
	\end{align*}
	for $j \in \{1,2,...n\}$. Here, $(\gamma)_j$ is the j-th column vector of a matrix $\gamma \in \Gamma$. Combining all the equations here we get a working version of K\"ahler-Einstein equation for the ball quotient $\mathbb{B}^n/\Gamma$:  
	\begin{align} \label{eq:KEBnquo}
		\sum_{\gamma^0,\gamma^1,...,\gamma^n \in \Gamma}\det{A(\overline{\gamma^0},\overline{\gamma^1},...,\overline{\gamma^n})}\prod_{i=0}^{n}\frac{\overline{\det{\gamma^i}}}{(1-<z,\overline{\gamma^iz}>)^{n+2}}=\left( \sum_{\gamma \in \Gamma} \frac{\overline{\det{\gamma}}}{(1-<z,\overline{\gamma z}>)^{n+1}} \right)^{n+2} 
	\end{align}
	where $<z,\bar{z}>=|z|^2 <1$.

	\section{K\"ahler-Einstein equation in dimension 2}	\label{sec:KEdim2}
	Let $\Gamma$ be a fixed point free subgroup of $U(2)$. In this section, we obtain a condition on $\Gamma$ necessary for Bergman metric on $\mathbb{B}^2/\Gamma$ to be K\"ahler-Einstein. In particular, we define an expression $C(\Gamma)$ in  Proposition \ref{lem:def_C(Gamma)} that must vanish.
	
	\begin{lemma}\label{lemma:fixedpointfreeidmat}
		$\Gamma$ be a subgroup of $U(n), n \geq 2$. If $\Gamma$ is fixed point free then the only $\gamma \in \Gamma$ with $\gamma_{11}=1$ is identity. 
	\end{lemma}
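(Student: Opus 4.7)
The plan is to exploit the unitarity of $\gamma$ to deduce that $\gamma_{11}=1$ forces the first row and first column of $\gamma$ to be standard basis vectors, and then observe that this produces a fixed point on $\partial\mathbb{B}^n$.

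First I would write out what unitarity imposes on the first row. Since the rows of $\gamma\in U(n)$ have unit norm, $|\gamma_{11}|^2+|\gamma_{12}|^2+\cdots+|\gamma_{1n}|^2=1$. The hypothesis $\gamma_{11}=1$ makes the first term already equal to $1$, forcing $\gamma_{1j}=0$ for $j=2,\dots,n$. The same computation applied to the first column (of unit norm) gives $\gamma_{i1}=0$ for $i=2,\dots,n$. Thus $\gamma$ has the block-diagonal shape
\begin{equation*}
\gamma=\begin{pmatrix} 1 & 0 \\ 0 & \gamma' \end{pmatrix},\qquad \gamma'\in U(n-1).
\end{equation*}

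Next, I would consider the action of $\gamma$ on the boundary $\partial\mathbb{B}^n$. The unit vector $e_1=(1,0,\dots,0)\in\partial\mathbb{B}^n$ is manifestly fixed by $\gamma$ since $\gamma e_1$ equals the first column of $\gamma$, which we just showed is $e_1$. Invoking the hypothesis that $\Gamma$ is fixed point free---no non-identity element of $\Gamma$ has a fixed point on $\partial\mathbb{B}^n$---we conclude $\gamma=\mathrm{id}$.

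The argument is essentially a one-line consequence of orthonormality of rows/columns of unitary matrices combined with the definition of fixed point freeness, so there is no real obstacle; the only point to be careful about is that the fixed point freeness hypothesis concerns the boundary $\partial\mathbb{B}^n$ (as specified in the preliminaries), and $e_1$ indeed lies there.
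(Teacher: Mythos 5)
Your proof is correct and follows essentially the same route as the paper: both use orthonormality of the rows and columns of a unitary matrix to deduce from $\gamma_{11}=1$ that the first row and column are $e_1$, and then invoke fixed point freeness (you exhibit $e_1\in\partial\mathbb{B}^n$ as a fixed point directly, while the paper phrases the same fact as $1$ being an eigenvalue of $\gamma$). No issues.
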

	\begin{proof}
		From the definition of `fixed point free' above, one can prove that a unitary group $\Gamma$ is `fixed point free' if and only if any of the following equivalent conditions hold: (i) any non-identity element in $\Gamma$ has no eigenvalue equal to 1; (ii) $0 \in \mathbb{B}^n$ is the only fixed point for any non-identity element in $\Gamma$.
		
		Since $\Gamma$ is a fixed point free subgroup of $U(n)$, if $\gamma_{11}=1$ for some $\gamma \in \Gamma$ then $\gamma_{1j}=0=\gamma_{j1}$ for $j \in \{2,3,...,n\}$. In particular, 1 becomes an eigenvalue for $\gamma$. In that case, the fixed point free condition on $\Gamma$ ensures that $\gamma=id$.
	\end{proof}
	We specialize the K\"ahler-Einstein equation \eqref{eq:KEBnquo} in dimension $n=2$ and substitute $z=(z_1,0)$ and let $x=z_1\overline{z_1}$. The left hand side of the equation \eqref{eq:KEBnquo} is given by
	\begin{align}\label{eq:KEjphiside}
		f(x) :=\sum_{\gamma^0,\gamma^1,\gamma^2 \in \Gamma}\det{A(\overline{\gamma^0},\overline{\gamma^1},\overline{\gamma^2})}\prod_{i=0}^{2}\frac{\overline{\det{\gamma^i}}}{(1-\overline{\gamma^i_{11}}x)^{4}}
	\end{align}
	and the right hand side of \eqref{eq:KEBnquo} equals $\phi(x)^4$ where
	\begin{equation}\label{eq:defphix}
		\phi(x)=\sum_{\gamma \in {\Gamma}} \frac{\overline{\det{\gamma}}}{(1-<z,\overline{\gamma z}>)^3}=\sum_{\gamma \in {\Gamma}} \frac{\overline{\det{\gamma}}}{(1-\overline{\gamma_{11}}x)^3}.
	\end{equation}
	We note that the determinant of the matrix $A(\overline{\gamma^0},\overline{\gamma^1},\overline{\gamma^2})$ in \eqref{eq:KEjphiside} is
	\begin{equation}\label{eq:detA}
		\det{A(\overline{\gamma^0},\overline{\gamma^1},\overline{\gamma^2})}=\det{\begin{pmatrix}
				1-\overline{\gamma^0_{11}}x & \overline{\gamma^1_{11}} & \overline{\gamma^2_{12}}\\
				3\overline{\gamma^0_{11}}x	& \frac{\overline{\gamma^1_{11}}(1-\overline{\gamma^1_{11}}x)+4(\overline{\gamma^1_{11}})^2x}{1-\overline{\gamma^1_{11}}x} &  \frac{\overline{\gamma^2_{12}}(1-\overline{\gamma^2_{11}}x)+4\overline{\gamma^2_{11}}\overline{\gamma^2_{12}}x}{1-\overline{\gamma^2_{11}}x} \\
				3\overline{\gamma^0_{21}}x & \frac{\overline{\gamma^1_{21}}(1-\overline{\gamma^1_{11}}x)+4\overline{\gamma^1_{11}}\overline{\gamma^1_{21}}x}{1-\overline{\gamma^1_{11}}x} & \frac{\overline{\gamma^2_{22}}(1-\overline{\gamma^2_{11}}x)+4\overline{\gamma^2_{21}}\overline{\gamma^2_{12}}x}{1-\overline{\gamma^2_{11}}x}
		\end{pmatrix}}.
	\end{equation}

	\begin{proposition}\label{lem:def_C(Gamma)}
		Let $\Gamma$ be a fixed point free subgroup $U(2)$. If the Bergman metric on the ball quotient $\mathbb{B}^2/\Gamma$ is K\"ahler-Einstein, then    
		\begin{align*}
			C(\Gamma):=\sum_{\gamma \in {\Gamma\backslash \{id\} }}\frac{\det{\gamma}}{(1-\gamma_{11})^4} \bigg(1-3\tr{\gamma}+\frac{4(\tr{\gamma}-\det{\gamma}-1)}{(1-\gamma_{11})}\bigg)=0.
		\end{align*}
	\end{proposition}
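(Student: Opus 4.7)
The plan is to specialize the K\"ahler-Einstein equation \eqref{eq:KEBnquo} to $z = (z_1, 0)$ with $x = z_1 \overline{z_1}$, reducing it to the scalar identity $f(x) = \phi(x)^4$ on $[0,1)$, and then to compare Laurent expansions of both sides around $x = 1$ in the variable $u := 1 - x$. Lemma \ref{lemma:fixedpointfreeidmat} is the key input: only the identity element of $\Gamma$ has $\gamma_{11} = 1$, so the only summand in $\phi$ singular at $x = 1$, and the only factor in the triple product appearing in $f$ singular at $x = 1$, is the one indexed by $\gamma = id$. The identity $C(\Gamma) = 0$ will emerge from equating the coefficients of $u^{-8}$.

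On the right, write $\phi(x) = u^{-3} + h(x)$, where $h(x) := \sum_{\gamma \ne id} \overline{\det\gamma}\,(1 - \overline{\gamma_{11}} x)^{-3}$ is regular at $x = 1$. The binomial expansion
\[
\phi(x)^4 = u^{-12} + 4 h(x)\, u^{-9} + 6 h(x)^2\, u^{-6} + 4 h(x)^3\, u^{-3} + h(x)^4
\]
combined with the Taylor expansion $h(x) = h(1) - h'(1) u + O(u^2)$ shows that the coefficients of $u^{-12}, u^{-11}, u^{-10}, u^{-9}, u^{-8}$ in $\phi^4$ are $1, 0, 0, 4 h(1), -4 h'(1)$ respectively, with $h(1)$ and $h'(1)$ explicit sums over $\Gamma \setminus \{id\}$. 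On the left, inspection of \eqref{eq:detA} reveals that the rational matrix $A$ has a singular entry at $x = 1$ only in position $(2,2)$ and only when $\gamma^1 = id$; a cofactor expansion along row $3$ or column $3$ shows further that $\det A$ has a genuine pole at $x = 1$ only when $\gamma^1 = \gamma^2 = id$ and $\gamma^0 \ne id$. The all-$id$ term yields $\det A \equiv 1$ and reproduces the $u^{-12}$ coefficient. Apart from that, exactly three configurations contribute to the principal part up to order $u^{-8}$: (I) $\gamma^0 = \gamma^1 = id$, $\gamma^2 \ne id$ (where $\det A$ collapses to the $(3,3)$-entry); (II) $\gamma^0 = \gamma^2 = id$, $\gamma^1 \ne id$ (where $\det A$ is regular); and (III) $\gamma^1 = \gamma^2 = id$, $\gamma^0 \ne id$ (where $\det A$ has a simple pole). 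Case (III) accounts for the $u^{-9}$ pole and its leading coefficient matches $4 h(1)$ automatically; cases (I), (II), and the subleading part of (III) all contribute to $u^{-8}$, while every other configuration has pole order at most $u^{-5}$.

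Equating the $u^{-8}$ coefficients yields a relation among sums of the form $\sum_{\gamma \ne id} \overline{\det\gamma}\, P(\overline{\gamma_{11}}, \overline{\gamma_{22}}, \overline{\det\gamma})\,(1 - \overline{\gamma_{11}})^{-k}$ for $k \in \{4, 5\}$. Applying the unitary identity $\overline{\gamma_{12}\gamma_{21}} = \overline{\gamma_{11}\gamma_{22}} - \overline{\det\gamma}$, the substitution $\overline{\gamma_{22}} = \overline{\tr\gamma} - \overline{\gamma_{11}}$, and the bijection $\gamma \mapsto \gamma^{-1}$ on $\Gamma$ (which removes all complex conjugations from the summands, since $(\gamma^{-1})_{11} = \overline{\gamma_{11}}$, $\det\gamma^{-1} = \overline{\det\gamma}$, and $\tr\gamma^{-1} = \overline{\tr\gamma}$ for unitary $\gamma$) collapses the identity exactly to $C(\Gamma) = 0$. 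The main technical obstacle is the subleading expansion in case (III): both $\det A$ and $(1 - \overline{\gamma^0_{11}} x)^{-4}$ must be carried one order past the leading singularity, and the resulting $u^{-8}$ piece must combine with the (I)--(II) contributions through a delicate algebraic cancellation before the compact form of $C(\Gamma)$ emerges.
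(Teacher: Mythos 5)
Your proposal is correct and follows essentially the same route as the paper's proof: specialize to $z=(z_1,0)$, use Lemma \ref{lemma:fixedpointfreeidmat} to isolate the singular terms, Laurent-expand $f-\phi^4$ at $x=1$, and extract $C(\Gamma)$ from the coefficient of the order-$8$ pole via exactly the same four configurations of $(\gamma^0,\gamma^1,\gamma^2)$, with only cosmetic differences (the paper removes conjugates by conjugating the whole identity rather than via the bijection $\gamma\mapsto\gamma^{-1}$). One intermediate claim is slightly overstated --- $\det A$ can also have a simple pole when $\gamma^1=id$ but $\gamma^0,\gamma^2\neq id$ --- but this is immaterial since, as you note, such configurations have total pole order at most $5$.
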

	\begin{proof}
		The K\"ahler-Einstein equation implies that  $f(x)-\phi^4(x)=0$ for $0\le x< 1$. Note that  $f(x)-\phi^4(x)$ is a rational function in $x$, hence it must equal zero as a rational function.  Moreover, we may extend it to a meromorphic function, and consider the Laurent series expansion at $x=1$. We show that $f(x)-\phi^4(x)$ has a pole at 1 of order at most 8 with $C(\Gamma)$ being the coefficient of $1/(x-1)^8$.
		
		First, we find the principal part for Laurent expansion of the function $\phi^4(x)$ around 1. For notational convenience, we work with conjugates of $\phi^4(x) $ and $f(x)$. Using \eqref{eq:defphix} and Lemma \ref{lemma:fixedpointfreeidmat} we get: 
		
		\begin{equation}\label{eq:phi^4}
			\overline{\phi^4(x)}=\left(\frac{1}{(1-x)^3} + \sum_{\gamma \in {\Gamma}, \gamma \neq id} \frac{\det{\gamma}}{(1-\gamma_{11}x)^3} \right)^4.
		\end{equation}
		
		Let us denote 
		\begin{equation*}
			s(x) := \sum_{\gamma \in {\Gamma}, \gamma \neq id} \frac{\det{\gamma}}{(1-\gamma_{11}x)^3}.
		\end{equation*}
		
		Using binomial expansion of \eqref{eq:phi^4} and Maclaurin expansion of $s(x)$ at 1, we obtain the first few terms of the principal part of $\overline{\phi^4(x)}$:
		\begin{align*}
			&\frac{1}{(x-1)^{12}}-\frac{4 s(1)}{(x-1)^{9}} -\frac{4s'(1)}{(x-1)^8} + \cdots \\
			=&\frac{1}{(x-1)^{12}}-\frac{4}{(x-1)^{9}}\sum_{\gamma \neq id} \frac{ \det{\gamma}}{(1-\gamma_{11})^3} -\frac{12}{(x-1)^8}\sum_{\gamma \in {\Gamma}, \gamma \neq id} \frac{\gamma_{11}. \det{\gamma}}{(1-\gamma_{11})^4} + \cdots 
			\label{phi^4asymp}
		\end{align*}
		The calculation for the Laurent expansion requires careful case work. We do the computation in the next Lemma. We obtain the required result by subtracting the principal part for $\overline{\phi^4(x)}$ and $\overline{f(x)}$.
	\end{proof}
	\begin{lemma}
		The principal part for the Laurent expansion of $\overline{f(x)}-\overline{\phi^4(x)}$ at 1 equals
		\begin{align*}
			\frac{C(\Gamma)}{(x-1)^8}+\cdots
		\end{align*}
	\end{lemma}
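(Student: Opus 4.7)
The plan is to classify the terms in $\overline{f(x)}$ according to how many of $\gamma^0, \gamma^1, \gamma^2$ equal the identity. By Lemma~\ref{lemma:fixedpointfreeidmat}, a factor $(1-\gamma^i_{11}x)$ vanishes at $x=1$ if and only if $\gamma^i=\mathrm{id}$. After multiplying through by $(1-\gamma^1_{11}x)(1-\gamma^2_{11}x)$ to clear the denominators hidden inside $\det A$, a generic term of the sum takes the form $\frac{Q(x)\,\det\gamma^0\det\gamma^1\det\gamma^2}{(1-\gamma^0_{11}x)^4(1-\gamma^1_{11}x)^5(1-\gamma^2_{11}x)^5}$ with $Q$ polynomial in $x$. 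A denominator count then shows that a term with at most one identity entry has a pole at $x=1$ of order at most $5$, so it contributes nothing to the $(x-1)^{-8}$ coefficient. Only the $(\mathrm{id},\mathrm{id},\mathrm{id})$ term and the three two-identity families need be analyzed.

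For the $(\mathrm{id},\mathrm{id},\mathrm{id})$ term, direct substitution gives $\det A = 1$ and the full term becomes exactly $(x-1)^{-12}$, with no lower-order pole contributions; this cancels the leading singularity of $\overline{\phi^4(x)}$ and simultaneously accounts for the vanishing at orders $10$ and $11$. For each of the three two-identity cases, writing $\gamma$ for the remaining non-identity element, a cofactor expansion of $\det A$ yields an explicit rational function. In the cases where the identity pair is $(\gamma^0,\gamma^1)$ or $(\gamma^0,\gamma^2)$, the resulting $\det A$ has no pole at $x=1$, the full term has pole of order exactly $8$, and it contributes only to the $(x-1)^{-8}$ coefficient. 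In the case with identity pair $(\gamma^1,\gamma^2)$, $\det A$ has a simple pole at $x=1$ and the full term has pole of order $9$; summing the leading residues over $\gamma \in \Gamma\setminus\{\mathrm{id}\}$ reproduces exactly the $-4s(1)/(x-1)^9$ appearing in the principal part of $\overline{\phi^4(x)}$, so these order-$9$ singularities cancel in $\overline{f(x)}-\overline{\phi^4(x)}$.

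The concluding step is to extract the $(x-1)^{-8}$ coefficient from each of the three two-identity cases and combine them with the $-4s'(1)/(x-1)^8$ coefficient of $\overline{\phi^4(x)}$ already computed in the proposition. Substituting $\gamma_{22} = \tr\gamma - \gamma_{11}$ and the identity $\gamma_{12}\gamma_{21} = \gamma_{11}\gamma_{22} - \det\gamma$ valid for any $2\times 2$ matrix, the four contributions combine for each $\gamma \in \Gamma\setminus\{\mathrm{id}\}$ into the single rational expression appearing inside the sum defining $C(\Gamma)$, completing the proof. The main obstacle is precisely this final simplification: three qualitatively different rational functions (one per two-identity case) together with the $-4s'(1)$ term must collapse into the single compact form of $C(\Gamma)$, and organizing the cofactor expansions of $\det A$ in each case so that this collapse becomes tractable is the key calculational task; everything else is bookkeeping about pole orders.
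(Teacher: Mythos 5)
Your proposal follows essentially the same route as the paper: classify terms of $\overline{f(x)}$ by how many of $\gamma^0,\gamma^1,\gamma^2$ equal the identity, discard the at-most-one-identity terms by a pole-order count at $x=1$, work out $\det A$ in the all-identity and three two-identity cases, verify the cancellation of the $(x-1)^{-12}$ and $(x-1)^{-9}$ singularities against the binomial expansion of $\overline{\phi^4(x)}$, and collect the $(x-1)^{-8}$ coefficients. The final simplification you flag is handled exactly by the identities $\gamma_{22}=\tr\gamma-\gamma_{11}$ and $\gamma_{12}\gamma_{21}=\gamma_{11}\gamma_{22}-\det\gamma$ you name, so the plan is correct and matches the paper's proof.
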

	\begin{proof}
		Recall the definition
		\begin{align*}
			\overline{f(x)}=\sum_{\gamma^0,\gamma^1,\gamma^2 \in \Gamma}\det{A(\gamma^0,\gamma^1,\gamma^2)}\prod_{i=0}^{2}\frac{\det{\gamma^i}}{(1-\gamma^i_{11}x)^{4}},
		\end{align*}
		We are interested here to only find the coefficients of $1/(x-1)^{\ell}$ where $\ell \geq 8$. Observe from \eqref{eq:detA} that $\det(A(\overline{\gamma^0},\overline{\gamma^1},\overline{\gamma^2}))$ has a pole at 1 of order at most 2. Thus we only need to consider the cases where at least two among the three matrices $\gamma^0,\gamma^1,\gamma^2$ are identity.
		
		\begin{itemize}
			\item $\gamma^0=\gamma^1=\gamma^2=id$: In this case, $\det(A(\gamma^0,\gamma^1,\gamma^2))=1$. So the contribution in $f(x)$ is $1/(x-1)^{12}$.  	 
			
			\item $\gamma^0=\gamma^1=id$, $\gamma^2 \neq id$: In this case, 
			\begin{align*}
				\det(A(\gamma^0,\gamma^1,\gamma^2))=\gamma^2_{22}+\frac{4 \gamma^2_{21} \gamma^2_{12}x}{1-\gamma^2_{11}x}.
			\end{align*}
			So the contribution in the principal part of the Laurent expansion up to 8th order is: 
			\begin{align*}
				&\sum_{\gamma \in \Gamma\setminus \{id\}} \frac{\det{\gamma}}{(1-x)^8(1-\gamma_{11}x)^4}\bigg(\gamma_{22}+\frac{4 \gamma_{21} \gamma_{12}x}{1-\gamma_{11}x} \bigg)\\
				&= \frac{1}{(x-1)^{8}}\sum_{\gamma \in \Gamma\setminus id} \frac{\det{\gamma}}{(1-\gamma_{11})^4} \bigg(\gamma_{22}+\frac{4 \gamma_{21} \gamma_{12}}{1-\gamma_{11}} \bigg)+\cdots 
			\end{align*}

			\item $\gamma^0=\gamma^2=id$, $\gamma^1 \neq id$: In this case, 
			\begin{align*}
				\det(A(\gamma^0,\gamma^1,\gamma^2))=(1-x)\bigg(\gamma^1_{11}+\frac{4 (\gamma^1_{11})^2 x}{1-\gamma^1_{11}}\bigg)-3 x \gamma^1_{11}
			\end{align*}
			So the contribution in the principal part of the Laurent expansion up to 8th order is: 
			\begin{align*}
				&\sum_{\gamma \in \Gamma\setminus \{id\}} \frac{\det{\gamma}}{(1-x)^8(1-\gamma_{11}x)^4 }\bigg((1-x)(\gamma_{11}+\frac{4 (\gamma_{11})^2 x}{1-\gamma_{11}})-3 x \gamma_{11}\bigg)\\
				&= \frac{1}{(x-1)^8}\sum_{\gamma \in \Gamma\setminus id}  \frac{-3 \gamma_{11} \det{\gamma} }{(1-\gamma_{11})^4} +\cdots
			\end{align*}
			
			\item $\gamma^2=\gamma^1=id$, $\gamma^0 \neq id$: In this case,
			\begin{align*}
				\det(A(\gamma^0,\gamma^1,\gamma^2))=\frac{1+(3-4\gamma^0_{11})x}{1-x}
			\end{align*}
			So the contribution in the principal part of the Laurent expansion up to 8th order is: 
			\begin{align*}
				&\sum_{\gamma \in \Gamma\setminus \{id\}} \frac{(1+(3-4\gamma_{11})x)\det{\gamma}}{(1-x)^9 (1-\gamma_{11}x)^4}\\
				&= \frac{1}{(x-1)^9}\sum_{\gamma \in \Gamma\setminus id}  \frac{-4 \det{\gamma}}{(1-\gamma_{11})^3} - \frac{1}{(x-1)^8}\sum_{\gamma \in \Gamma\setminus id} \frac{ \det{\gamma}(3+12 \gamma_{11})}{(1-\gamma_{11})^4} +\cdots
			\end{align*}   	 
		\end{itemize}  
		After combining all these contributions together and using the expansion of $\overline{\phi^4(x)}$, we observe that the coefficient of $(x-1)^{-12}$ and $(x-1)^{-9}$ in $\overline{f(x)}-\overline{\phi^4(x)}$ equals zero and the coefficient of $(x-1)^{-8}$ equals
		\begin{align*}
			\sum_{\gamma \in {\Gamma\backslash \{id\}}}	\frac{\det \gamma}{(1-\gamma)^4}\bigg(\gamma_{22}-3\gamma_{11}-3 +\frac{4\gamma_{12}\gamma_{21}}{1-\gamma_{11}}\bigg)=C(\Gamma).
		\end{align*}
	\end{proof}
	
	\section{Milnor's Theorem and Group Representations}\label{sec:grouprep}
	The following theorem by J. Milnor \cite{MilnoractionS3} gives us all finite, fixed point free groups having a unitary action on $\partial \mathbb{B}^2$. The classification of such finite, fixed point free, unitary groups in higher dimensions can be found in \cite{WolfSpacesofconstantcurvature2011} in the context of finding spaces of constant curvature.
	\begin{theorem*}[\cite{MilnoractionS3}]\label{MilnoractionS3}
		The following is a list of all finite groups which act orthogonally on $ \mathbb{S}^3$ without fixed points: 
		\begin{enumerate}
			\item The group 1;
			\item $ Q_{8n}= <X,Y: X^2=(XY)^2=Y^{2n}>, n \in \mathbb{N}$;
			\item $ P_{48}= <X,Y: X^2=(XY)^3=Y^{4}, X^4=id>$;
			\item $ P_{120}= <X,Y: X^2=(XY)^3=Y^{5}, X^4=id>$;
			\item $ D_{2^m(2n+1)}=<X,Y: X^{2^m}=id, Y^{2n+1}=id, XYX^{-1}=Y^{-1}>$ where $m \geq 2, n \geq 1$;
			\item $P'_{8.3^m}=<X,Y,Z: X^2=(XY)^2=Y^2, ZXZ^{-1}=Y, ZYZ^{-1}=XY, Z^{3^m}=id>$ where $m \in \mathbb{N}$;
			\item Direct product of any of the groups above with a cyclic group of relatively prime order.   
		\end{enumerate}		
		
	\end{theorem*}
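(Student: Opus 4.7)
The plan is to classify the finite subgroups of $O(4)$ acting freely on $S^3$, following the strategy going back to Hopf, Threlfall--Seifert, and Milnor himself. First I would reduce to the orientation-preserving case. Any non-identity element of $O(4)$ acting freely on $S^3$ must have no $+1$ eigenvalue, whereas a short eigenvalue count -- using that non-real eigenvalues of an orthogonal matrix come in conjugate pairs on the unit circle with product $+1$ -- shows that every element of $O(4)\setminus SO(4)$ does have $+1$ as an eigenvalue. So any finite fixed-point-free $\Gamma \subset O(4)$ is already contained in $SO(4)$.

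The main tool is the double cover $\rho\colon Sp(1)\times Sp(1) \to SO(4)$ given by $(p,q)\cdot x = p\,x\,q^{-1}$ under the identification $\mathbb{R}^4 \cong \mathbb{H}$, with kernel $\{\pm(1,1)\}$. A pair $(p,q)$ acts freely on $S^3$ if and only if no unit quaternion $x$ satisfies $px = xq$; and since two elements of $Sp(1)$ are conjugate if and only if they share the same real part, this is equivalent to $\mathrm{Re}(p)\ne \mathrm{Re}(q)$. Lifting $\Gamma$ to a subgroup $\tilde\Gamma \subset Sp(1)\times Sp(1)$ containing $\ker\rho$ thus converts the problem into classifying those finite $\tilde\Gamma$ all of whose non-central elements satisfy the real-part inequality. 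I would then invoke the classical ADE classification of finite subgroups of $Sp(1)\cong SU(2)$ -- the cyclic groups together with the binary dihedral, tetrahedral, octahedral, and icosahedral groups -- and Goursat's lemma, which describes any subgroup of a direct product in terms of its two projections, two projection kernels, and an isomorphism between the associated quotients.

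The hard part, and the main obstacle, is the case analysis over the Goursat data subject to the fixed-point-free condition. Many combinations are eliminated immediately because the two projections contain distinct non-identity elements sharing a real part -- for instance, two cyclic projections of compatible orders collide via common primitive roots of unity -- while the surviving configurations produce exactly the families $Q_{8n}$, $P_{48}$, $P_{120}$, $D_{2^m(2n+1)}$, and $P'_{8\cdot 3^m}$ in the theorem. Item (7) then arises because a central cyclic factor commutes with all of $\tilde\Gamma$, and compatibility with the fixed-point-free condition forces its order to be coprime to the orders appearing in the remaining factor. Matching each surviving configuration with generators adapted to its binary polyhedral structure yields the stated presentations, completing the classification.
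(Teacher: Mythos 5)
This statement is quoted from Milnor's paper and is not proved in the present article at all --- it is invoked as an external classification result --- so there is no internal proof to compare your attempt against. Judged on its own terms, your outline is the standard Hopf/Seifert--Threlfall route that Milnor himself recounts: the reduction from $O(4)$ to $SO(4)$ via the eigenvalue parity argument is correct, the double cover $Sp(1)\times Sp(1)\to SO(4)$ with the criterion ``$(p,q)$ has a fixed point on $S^3$ iff $p$ and $q$ are conjugate in $Sp(1)$, iff $\mathrm{Re}(p)=\mathrm{Re}(q)$'' is the right tool, and the ADE list together with Goursat's lemma is the correct framework. (One small imprecision: the condition should be imposed on all elements of $\tilde\Gamma$ outside $\ker\rho=\{\pm(1,1)\}$, not merely on ``non-central'' elements; as it happens the remaining central elements $\pm(1,-1)$ satisfy the inequality automatically, but in general centrality in $\tilde\Gamma$ is not the right exemption.)

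The genuine gap is that the entire content of the theorem lies in the step you defer: the exhaustive elimination over Goursat data. Saying that ``many combinations are eliminated because the two projections contain distinct non-identity elements sharing a real part'' and that ``the surviving configurations produce exactly the families $Q_{8n}$, $P_{48}$, $P_{120}$, $D_{2^m(2n+1)}$, $P'_{8\cdot 3^m}$'' asserts the conclusion rather than deriving it. A complete argument must, for each pair of finite subgroups $H_1,H_2\subset Sp(1)$ and each isomorphism of quotients $H_1/K_1\cong H_2/K_2$, decide freeness; this includes showing that two non-cyclic binary polyhedral projections are impossible, isolating the index-$2$ Goursat configurations on binary dihedral groups that yield $D_{2^m(2n+1)}$, the index-$3$ configurations on the binary tetrahedral group that yield $P'_{8\cdot 3^m}$, and then verifying that each surviving subgroup of $SO(4)$ is abstractly isomorphic to one of the stated presentations (and that coprime cyclic central factors account for item (7)). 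None of this is routine, and it occupies a full chapter of Wolf's \emph{Spaces of Constant Curvature}. So your proposal is a correct plan with sound preliminary reductions, but not yet a proof of the classification.
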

	These groups above are sufficient to consider because the real sphere $\mathbb{S}^3$ forms the boundary of the two complex dimensional unit ball $\mathbb{B}^2$ and we are interested in unitary action of a finite, fixed point free group on the unit ball which is a special case of orthogonal action (because $U(2)$ is embeddable in $SO(4)$).

	\begin{remark}
		The representation of any of these groups above, in $U(2)$, has to be faithful in this case, to obey the `fixed point free' condition. Recall from Section \ref{sec:ballquo} that a group $\Gamma\subset Aut(\mathbb{B}^2)$ and its conjugate with an element in $Aut(\mathbb{B}^2)$ produce biholomorphic ball quotients. So it is enough to consider the simplest unitary representations of the groups up to conjugation.
	\end{remark}
	In \cite{ebenfel2020classificationstein}, the authors have already taken care of the cyclic groups. In the following sections we consider the remaining non-abelian groups in the list and their direct products with cyclic groups.\\
	
	\subsection{Representation of the groups in $U(2)$:}\label{repsubsection}
	\begin{proposition} \label{repgamma2}
		Any embedding of the group $Q_{8n}=<X,Y: X^2=(XY)^2=Y^{2n}>$  in $U(2)$ is represented (up to conjugation) by the matrix group generated by
		\begin{align*}
			X=\begin{pmatrix} 0 & 1\\ -1 & 0 \end{pmatrix},\hspace{1cm}  Y= \begin{pmatrix} a & 0\\ 0 & a^{-1} \end{pmatrix}
		\end{align*} 
		where $a$ is a primitive $4n^{\text{th}}$ root of unity. 
	\end{proposition}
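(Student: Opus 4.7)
The plan is to extract from the presentation the structural information needed to diagonalize one generator and normalize the other by simultaneous unitary conjugation; once the relations are unpacked, the argument is essentially linear algebra. First, from $X^{2}=(XY)^{2}$ I would deduce $XYXY=X^{2}$, hence $YXY=X$, which rearranges to the commutation relation $XYX^{-1}=Y^{-1}$. Combined with $X^{2}=Y^{2n}$, this shows that $X^{2}$ is central and that $X^{4}=Y^{4n}$; since an embedding in $U(2)$ is by definition a faithful representation, $Y$ must have order exactly $4n$ and $X$ order exactly $4$.

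Next, I would use $XYX^{-1}=Y^{-1}$ to pin down the eigenstructure of $Y$. As a unitary matrix of finite order, $Y$ is unitarily diagonalizable with eigenvalues among the $4n$-th roots of unity, and because conjugation by $X$ sends $Y$ to $Y^{-1}$, it interchanges these eigenvalues; they must therefore be of the form $\{a,a^{-1}\}$. Faithfulness of $\langle Y\rangle$ forces $a$ to be a primitive $4n$-th root of unity, and in particular $a\neq a^{-1}$, so the two eigenspaces are one-dimensional and distinct. After conjugating by an appropriate element of $U(2)$, I may assume $Y=\mathrm{diag}(a,a^{-1})$, as in the statement.

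In this basis $X$ swaps the two eigenlines of $Y$ and hence is anti-diagonal, with entries $\alpha$ in the $(1,2)$ position and $\beta$ in the $(2,1)$ position. Unitarity gives $|\alpha|=|\beta|=1$, while the relation $X^{2}=Y^{2n}=a^{2n}I=-I$ (using that $a^{2n}=-1$ for a primitive $4n$-th root) gives $\alpha\beta=-1$. I still retain the freedom to conjugate by any diagonal unitary $\mathrm{diag}(d_{1},d_{2})$, which preserves $Y$ and transforms $\alpha\mapsto d_{1}\overline{d_{2}}\,\alpha$; choosing $d_{1}\overline{d_{2}}=\overline{\alpha}$ normalizes $\alpha=1$, and then $\beta=-1$ follows automatically, yielding precisely the matrices displayed.

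The one step requiring care, and the only real obstacle, is justifying that $a$ may be taken primitive: one must invoke the remark preceding the proposition to conclude that the fixed-point-free hypothesis makes the representation faithful, so $Y$ cannot have order strictly less than $4n$. Without this input the classification would collapse to that of proper quotients of $Q_{8n}$, and the parametrization of the embedding by a primitive $4n$-th root of unity would fail.
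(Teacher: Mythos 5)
Your argument is correct and follows essentially the same route as the paper: derive $XYX^{-1}=Y^{-1}$ from the presentation, diagonalize $Y$ as $\mathrm{diag}(a,a^{-1})$ with $a$ a primitive $4n$-th root of unity, conclude that $X$ is anti-diagonal with unimodular entries satisfying $\alpha\beta=-1$ (from $X^{2}=Y^{2n}=-I$), and normalize $\alpha=1$ by conjugating with a diagonal unitary. Your version is, if anything, slightly more careful in pinning down the eigenvalue pair $\{a,a^{-1}\}$ via the swap induced by conjugation by $X$, where the paper instead argues through $\det Y\in\{\pm1\}$.
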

	\begin{proof}
		We may conjugate the group to assume that $Y$ is represented by a diagonal matrix. Using the presentation, we observe that $Y$ has order $4n$ and $\det Y\in\{\pm1\}$, hence \[Y=\begin{pmatrix}
			a &0\\ 0 & \pm a^{-1}
		\end{pmatrix}, \]
		where $a$ is primitive $4n^{\text{th}}$ root of unity. Taking $X$ as a general element of $U(2)$, and using the relation $Y^{-1}X=XY$ and $\det X^2=1$, we obtain that $X$ and $Y$ are given by \[X=\begin{pmatrix}
			0 &b\\ - \overline{b} &  0
		\end{pmatrix},\hspace{1cm } Y=\begin{pmatrix}
			a &0\\ 0 &  a^{-1}
		\end{pmatrix}, \]
		where $|b|^2=1$ and $a$ is primitive $4n^{\text{th}}$ root of unity. We may further conjugate the group by the diagonal matrix $\text{Diag}(\sqrt{b}, \sqrt{\overline{b}})$ (where square root is defined using any branch) to obtain the required representation.
	\end{proof}

	\begin{proposition}\label{repgamma5}
		Any embedding of the group $D_{2^{m}(2n+1)}=<X,Y: X^{2^m}=id, Y^{2n+1}=id, XYX^{-1}=Y^{-1}> $, where $m \geq 2, n \geq 1$, in $U(2)$ is represented (up to conjugation) by the matrix group generated by 	\begin{align*}
			X=\begin{pmatrix} 0 & 1\\ b &0 \end{pmatrix},\hspace{1cm} Y= \begin{pmatrix} a & 0\\  0& a^{-1} \end{pmatrix}
		\end{align*} 
		where $b$ is a primitive $2^{m-1}$th root of unity and $a$ is a primitive $(2n+1)^{th}$ root of unity.
		
	\end{proposition}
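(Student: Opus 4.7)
The plan is to mirror the strategy used in Proposition \ref{repgamma2}. First I would conjugate $\Gamma$ by a unitary matrix to diagonalize $Y$; this is possible since $Y$ is a finite-order element of $U(2)$. By the remark preceding, the embedding must be faithful, so $Y$ has order exactly $2n+1$. Writing $Y=\mathrm{diag}(a,c)$, the relation $XYX^{-1}=Y^{-1}$ forces the spectra of $Y$ and $Y^{-1}$ to coincide, i.e.\ $\{a,c\}=\{a^{-1},c^{-1}\}$. Since $2n+1$ is odd, the only $(2n+1)$th root of unity fixed by inversion is $1$, and faithfulness rules out $a=c=1$. Hence $c=a^{-1}$ with $a$ a primitive $(2n+1)$th root of unity, giving $Y$ in the desired form.

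Next I would determine the shape of $X$. Writing $X=\begin{pmatrix} p & q \\ r & s \end{pmatrix}$ and comparing entries in $XY=Y^{-1}X$ produces the identities $pa=a^{-1}p$ and $sa^{-1}=as$. Since $a$ has odd order greater than $1$, $a^2\neq 1$, so these identities force $p=s=0$, i.e.\ $X$ is antidiagonal. Unitarity then gives $|q|=|r|=1$. A further conjugation by a diagonal unitary $D=\mathrm{diag}(d_1,d_2)$ preserves the form of $Y$ and transforms $X$ into $\begin{pmatrix} 0 & qd_1/d_2 \\ rd_2/d_1 & 0 \end{pmatrix}$; choosing $d_1/d_2=q^{-1}$ normalizes the $(1,2)$-entry to $1$, yielding $X=\begin{pmatrix} 0 & 1 \\ b & 0 \end{pmatrix}$ with $b=qr$ of modulus $1$.

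Finally I would extract the order constraint on $b$. A direct calculation gives $X^2=bI$, and hence $X^{2^m}=b^{2^{m-1}}I$; the relation $X^{2^m}=\mathrm{id}$ therefore reads $b^{2^{m-1}}=1$, while faithfulness forces the order of $X$ (which equals $2\cdot\mathrm{ord}(b)$) to be exactly $2^m$, so $b$ is a primitive $2^{m-1}$th root of unity. The computations are essentially mechanical; the only subtlety—and thus the potential obstacle—is tracking the faithfulness hypothesis carefully so that $a$ and $b$ are forced to be primitive roots of the correct orders rather than of proper divisors.
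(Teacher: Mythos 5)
Your proposal is correct and follows essentially the same route as the paper: diagonalize $Y$ (the paper deduces $Y=\mathrm{diag}(a,a^{-1})$ from $\det Y=1$ and the order condition, you from the spectrum of $XYX^{-1}=Y^{-1}$, which amounts to the same thing), show $X$ is antidiagonal from the same relation, normalize the off-diagonal entry by a diagonal conjugation, and pin down $b$ from $X^2=bI$ and the order of $X$. No gaps.
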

	\begin{proof}
		We may assume that $Y$ is represented by a diagonal matrix. Moreover, the given relations imply that $\det Y=1$ and has order $(2n+1)$, hence $Y$ is given by the required matrix.
		
		Taking $X$ as a general element in $U(2)$, the relation $XY=Y^{-1}X$ implies that $X$ is an anti-diagonal matrix and is given by \begin{align*}
			X= \begin{pmatrix} 0 & c\\ b\bar{c} &0 \end{pmatrix}.
		\end{align*}  
		where $|c|=|b|=1$.
		We may assume $c=1$ by conjugating the group with diagonal matrix $\text{Diag}(\sqrt{c},\sqrt{\bar{c}})$. Since the order of $X$ is $2^m$, we obtain $b$ is a primitive $2^{m-1}$th root of unity.
	\end{proof}

	\begin{proposition}\label{repgamma34}
		Any fixed point free embedding of the group $P_{48}$ (or $P_{120}$)  in $U(2)$ is represented (up to conjugation) by the matrix group generated by
		\begin{align*}
			X= \begin{pmatrix} c& d\\ -\bar{d} &\bar{c} \end{pmatrix},\hspace{1cm} Y= \begin{pmatrix} a & 0\\  0& a^{-1} \end{pmatrix}
		\end{align*} 
		where $a$ is a primitive $8^{th}$ (or $10^{th}$) root of unity, $c=1/(a-\bar{a})$ and $|d|^2=1+c^2$. 
	\end{proposition}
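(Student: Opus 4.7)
The plan is to follow the strategy of Propositions~\ref{repgamma2} and~\ref{repgamma5}: first diagonalize $Y$ by conjugation, then use the defining relations of $P_{48}$ (resp.\ $P_{120}$) to determine $X$.

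After conjugation we may write $Y=\begin{pmatrix} a & 0\\ 0 & b \end{pmatrix}$, so that the relation $X^2=Y^4$ (resp.\ $X^2=Y^5$) makes $X^2$ diagonal. If the two diagonal entries of $X^2$ differed, then $X$ would have to preserve the coordinate axes (the eigenspaces of $X^2$), forcing $X$ itself diagonal and the image abelian, which contradicts non-abelianness. Hence $X^2=\alpha I$ is a scalar, and $X^4=\mathrm{id}$ forces $\alpha=\pm1$. The fixed point free hypothesis rules out $\alpha=1$: in that case $X=-I$ and the image $\langle X,Y\rangle$ would be abelian (all diagonal), contradicting faithfulness. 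Therefore $X^2=-I$, giving $a^4=b^4=-1$ (resp.\ $a^5=b^5=-1$), so that $a$ and $b$ are primitive $8$-th (resp.\ $10$-th) roots of unity.

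Next I would show that the image lies in $SU(2)$. Taking determinants in the three defining relations constrains $\det X$ to $\{\pm 1\}$. To rule out $\det X=-1$, I would parametrize a general element of $U(2)$ with determinant $-1$ as $X=\begin{pmatrix} c & d \\ \bar d & -\bar c \end{pmatrix}$ with $|c|^2+|d|^2=1$ and impose $X^2=-I$; this yields the off-diagonal condition $d(c-\bar c)=0$ and the diagonal condition $c^2+|d|^2=-1$. The subcase $d=0$ makes $X$ diagonal and hence commuting with $Y$, contradicting non-abelianness; the subcase $c\in\mathbb{R}$ makes $c^2+|d|^2\ge 0\ne -1$, a contradiction. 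Hence $\det X=\det Y=1$, giving $b=a^{-1}$ and $X=\begin{pmatrix} c & d \\ -\bar d & \bar c \end{pmatrix}\in SU(2)$ with $|c|^2+|d|^2=1$; the same non-abelianness argument further forces $d\ne 0$.

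Finally, imposing $X^2=-I$ on this $SU(2)$ form gives $c+\bar c=0$, so $c$ is purely imaginary. The relation $(XY)^3=-I$ forces the eigenvalues $\mu,\mu^{-1}$ of $XY\in SU(2)$ to satisfy $\mu^3=-1$; the degenerate possibility $\mu=-1$ would give $XY=-I$ and hence $Y=-X^{-1}=X$ (using $X^{-1}=X^3=-X$), which is inconsistent with $Y$ diagonal and $d\ne 0$. Thus $\mu=e^{\pm i\pi/3}$ and $\mathrm{tr}(XY)=1$. Computing $\mathrm{tr}(XY)=ca+\bar c a^{-1}=c(a-\bar a)$ (using $\bar c=-c$ and $\bar a=a^{-1}$) yields $c=1/(a-\bar a)$, and the unitarity $|c|^2+|d|^2=1$ combined with $c^2=-|c|^2$ then gives $|d|^2=1+c^2$. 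I expect the main obstacle to be the clean elimination of the $\det X=-1$ branch via the matrix parametrization above; the remaining steps are direct algebraic verifications and proceed uniformly for both $P_{48}$ and $P_{120}$.
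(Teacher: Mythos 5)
Your proposal is correct and reaches the same normal form as the paper, but two of your key steps are carried out differently. The paper pins down $\det X=1$ in one stroke: since $X$ is non-diagonal its minimal polynomial equals its characteristic polynomial, which divides $t^4-1$, so $X$ has two \emph{distinct} eigenvalues in $\{\pm1,\pm i\}$; fixed-point-freeness excludes $1$, and $\det X\in\{\pm1\}$ then forces the pair $\{i,-i\}$, giving $\det X=1$ and $X^2=-I$ simultaneously. You instead first show $X^2=\pm I$ by the commutation argument, use fixed-point-freeness to get $X^2=-I$, and then eliminate $\det X=-1$ by parametrizing determinant-$(-1)$ unitaries as $\begin{pmatrix}c&d\\ \bar d&-\bar c\end{pmatrix}$ and checking $X^2=-I$ is unsolvable there; this is a valid (and self-contained) alternative, at the cost of one extra case analysis. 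For the constant $c$, the paper equates entries of $XYX=-Y^{-1}X^{-1}Y^{-1}$ and reads off $c(a-a^{-1})=1$ from the $(1,2)$ entry, whereas you obtain the same scalar equation more conceptually as $\tr(XY)=\mu+\mu^{-1}=1$ after ruling out $XY=-I$; your route is arguably cleaner since it avoids multiplying out the triple product. One small point to patch: for $P_{120}$ the relation $a^5=-1$ alone gives $a$ of order $2$ or $10$, so you should explicitly exclude $a=-1$ (e.g., via $b=a^{-1}$ it would force $Y=-I$, contradicting faithfulness or the non-commutativity of $X$ and $Y$); for $P_{48}$, $a^4=-1$ does immediately give a primitive $8$th root, and the rest of your verification, including $|d|^2=1-|c|^2=1+c^2$ from $c$ purely imaginary, is fine.
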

	\begin{proof}
		We use the group presentation $P_{48}=<X,Y: X^2=(XY)^3=Y^{4}, X^4=id> $.
		We may conjugate the group and assume that $Y$ is represented by a diagonal matrix. Using the presentation of the group, we obtain that $Y$ has order $8$ and $\det X=\det Y\in\{\pm 1\}$. 
		
		Note that the matrix $X$ must not be a diagonal matrix, since $X$ does not commute with $Y$.  Thus the minimal polynomial of $X$ must have degree at least 2, hence it must equal the characteristic polynomial of $X$. Since $X$ satisfies $X^4-id=0$, $X$ has exactly two distinct eigenvalues $\{\lambda_1,\lambda_2\}$ from the set $\{\pm1,\pm i\}$. Since $X$ acts on $\partial\mathbb{B}^2$ fixed point freely and $X$ is not the identity matrix, its eigenvalue cannot be 1. Using $\det X\in\{\pm1\}$, we can conclude that $\{\lambda_1,\lambda_2\}=\{i,-i\}$. In particular, $\det Y=\det X=\lambda_1\lambda_2=1$. 
		
		We may thus assume that 	
		\begin{align*}
			X= \begin{pmatrix} c & d\\ -\bar{d} &\bar{c} \end{pmatrix}\hspace{1cm}\text{and} \hspace{1cm} Y= \begin{pmatrix} a & 0\\  0& a^{-1} \end{pmatrix}.
		\end{align*} 
		where $|c|^2+|d|^2=1$ and $a$ is primitive $8^{th}$ root of unity.
		
		Observe that $Y^4=-id =X^2$ which implies $X=-X^{-1}$. It gives us the identity $c+\bar{c}=0$. Furthermore, we use the relation $(XY)^3=Y^4=-I$ to obtain $XYX=-Y^{-1}X^{-1}Y^{-1}$. Equating entries of matrices in the last identity, we obtain 
		\begin{align*}
			ac^2-|d|^2a^{-1}=-a^{-2}\bar{c} \hspace{1cm}\text{and} \hspace{1cm} acd+a^{-1}\bar{c}d=d .	
		\end{align*}
		Note that $d\ne 0$ since $X$ is not diagonal. Thus the second identity implies $c=1/(a-a^{-1})$.
		
		The result for the group $P_{120}$ can be proved verbatim by letting $a$ be a primitive $10^{\text{th}}$ root of unity.
	\end{proof}

	\begin{proposition}\label{repgamma6}
		Any embedding of the group \[P'_{8\cdot 3^{m}}=<X,Y,Z: X^2=(XY)^2=Y^2, ZX=YZ, ZY=XYZ, Z^{3^m}=id>\]  in $U(2)$ is represented (up to conjugation) by the matrix group generated by
		\begin{align*}
			X=\begin{pmatrix} 0 & 1\\ -1 & 0 \end{pmatrix},\hspace{1cm}  Y= \begin{pmatrix} i & 0\\ 0 & -i \end{pmatrix}\hspace{1cm}
			Z=\frac{\beta}{i-1}
			\begin{pmatrix} 1 & -i\\ -1 & -i \end{pmatrix}
		\end{align*} 
		where $\beta$ is primitive $3^{m}${th} root of unity. 
	\end{proposition}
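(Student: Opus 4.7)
The strategy is to use Proposition \ref{repgamma2} to first normalize $X$ and $Y$, and then determine $Z$ from the remaining defining relations. The subgroup $\langle X,Y\rangle$ inside $P'_{8\cdot 3^m}$ satisfies exactly the relations $X^2=(XY)^2=Y^2$ that present $Q_8=Q_{8\cdot 1}$, and any embedding restricts faithfully to this subgroup. Invoking Proposition \ref{repgamma2} with $n=1$, I may conjugate inside $U(2)$ so that $Y=\mathrm{diag}(a,a^{-1})$ for some primitive fourth root of unity $a$ and $X=\begin{pmatrix}0&1\\-1&0\end{pmatrix}$; conjugating further by the permutation matrix $\begin{pmatrix}0&1\\1&0\end{pmatrix}$ if necessary (this swaps the diagonal entries of $Y$ while preserving the form of $X$ up to a harmless sign), I may set $a=i$, matching the stated form.

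With $X$ and $Y$ fixed, I would write $Z=\begin{pmatrix}p&q\\r&s\end{pmatrix}$ and solve the matrix equations $ZX=YZ$ and $ZY=XYZ$ coming from the braiding relations. Equating entries in $ZX=YZ$ yields $q=-ip$ and $s=ir$, and substituting into $ZY=XYZ$ (with $XY=\begin{pmatrix}0&-i\\-i&0\end{pmatrix}$) collapses the system to $r=-p$. Hence $Z=pM$ with $M:=\begin{pmatrix}1&-i\\-1&-i\end{pmatrix}$, and the computation $M^*M=2I$ together with the unitarity of $Z$ forces $|p|^2=1/2$. Since $|i-1|=\sqrt{2}$, one may parametrize $p=\beta/(i-1)$ with $|\beta|=1$.

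Finally, I would impose $Z^{3^m}=\mathrm{id}$ together with faithfulness of the embedding. A direct calculation gives $M^3=(2+2i)I$, while the identity $(i-1)^3=2+2i$ produces the clean collapse
\[Z^{3^m}=p^{3^m}(2+2i)^{3^{m-1}}I=\frac{\beta^{3^m}}{(i-1)^{3^m}}(2+2i)^{3^{m-1}}I=\beta^{3^m}I.\]
Thus $Z^{3^m}=I$ is equivalent to $\beta^{3^m}=1$, and faithfulness (which forces the matrix $Z$ to have the same order as the abstract generator) upgrades this to the requirement that $\beta$ be a \emph{primitive} $3^m$th root of unity. The only step that goes beyond routine computation is this last one: one must confirm that the order of $Z$ in $P'_{8\cdot 3^m}$ is exactly $3^m$ rather than a smaller power of $3$, which follows from the fact that $\langle Z\rangle$ maps isomorphically onto the quotient $P'_{8\cdot 3^m}/\langle X,Y\rangle$ of order $3^m$ by the coprimality of $3$ and $8$.
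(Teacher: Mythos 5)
Your proof is correct and follows essentially the same route as the paper's: normalize $X$ and $Y$ via the $Q_8$ subgroup, solve the relations $ZX=YZ$ and $ZY=XYZ$ to force $Z=p\begin{pmatrix}1&-i\\-1&-i\end{pmatrix}$, and use unitarity together with the order of $Z$ to identify $\beta=p(i-1)$ as a primitive $3^m$th root of unity. Your explicit justification that $Z$ has order exactly $3^m$ (via the quotient by $\langle X,Y\rangle$) is a welcome added detail that the paper leaves implicit.
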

	\begin{proof}
		Observe that the subgroup generated by $X$ and $Y$ is isomorphic to $Q_8$. Thus letting $Y$ be a diagonal matrix (using conjugation), we obtain the required matrix representation for $X$ and $Y$. 
		
		Take a general matrix representation of $Z$ in $U(2)$ given by 
		\begin{align*}
			Z=\begin{pmatrix} a & b\\ -e^{i\theta}\bar{b} & e^{i\theta}\bar{a} \end{pmatrix}
		\end{align*} 
		where $|a|^2+|b|^2=1$. The relation $ZX=YZ$ implies $b=-ia$. Furthermore, the relation $ZY=XYZ$ gives us $a=ie^{i\theta}\bar{a}$. Substituting back in the above identities, we obtain 
		\begin{align*}
			Z=a\begin{pmatrix} 1 & -i\\ -1 & -i\end{pmatrix}.
		\end{align*} 
		Note that $Z^3=(i-1)^3a^3\cdot id$. Let $\beta=a(i-1)$. Since $Z^{3}$ has order $3^{m-1}$, $\beta$ equals a $3^{m}$th primitive root of unity.
	\end{proof}
	
	\begin{remark}
		The analogous representation of the direct product of the groups $\Gamma$ considered above with $\mathbb{Z}/p\mathbb{Z}$ such that $gcd(p, |\Gamma|)=1$ can be easily obtained. Indeed, a generator of $\mathbb{Z}/p\mathbb{Z}$ must be represented by the matrix $u\cdot id$, where $u$ is a primitive $p^{th}$ root of unity. This can be explicitly observed by using the property that the matrix representing generator of $\mathbb{Z}/p\mathbb{Z}$ commutes with the generators of $\Gamma$ and has order $p$. 
	\end{remark}

	\section{The groups $Q_{8n}$ and $D_{2^m(2n+1)}$}
	First we state an important lemma:
	\begin{lemma}\label{lem:SUnonKE}
		If $\Gamma$ is a nontrivial, fixed point free, finite subgroup of  $SU(2)$ then $\mathbb{B}^2/\Gamma$ does not have K\"ahler-Einstein Bergman metric. In particular, it is true for $\Gamma=Q_{8n}$ or $D_{2^m(2n+1)}$.   
	\end{lemma}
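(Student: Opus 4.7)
The plan is to invoke Proposition \ref{lem:def_C(Gamma)}: it suffices to show $C(\Gamma)\ne 0$ for the groups in question.

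First I would exploit the special structure of $SU(2)$. For any $\gamma \in SU(2)$ one has $\det\gamma = 1$ and $\gamma_{22} = \overline{\gamma_{11}}$, so $\tr\gamma = \gamma_{11}+\overline{\gamma_{11}}$. Setting $a := \gamma_{11}$ and combining fractions, the summand defining $C(\Gamma)$ simplifies to
\[
\frac{-7 + \bar a + 3a^2 + 3|a|^2}{(1-a)^5}
\]
when $a\ne 0$, and equals $-7$ when $a = 0$ (noting that any $\gamma\in U(2)$ with $\gamma_{11}=0$ is forced to be anti-diagonal, hence has $\tr\gamma=0$ and $\det\gamma$ of unit modulus). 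Since $\gamma^{-1}\in\Gamma$ whenever $\gamma\in\Gamma$, and $(\gamma^{-1})_{11}=\bar a$ in $SU(2)$, the summands for $\gamma$ and $\gamma^{-1}$ are complex conjugates, so $C(\Gamma)\in\mathbb{R}$.

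Next I would compute $C(\Gamma)$ explicitly for $\Gamma = Q_{8n}\subset SU(2)$ using Proposition \ref{repgamma2}. The $4n$ elements $Y^kX$ are anti-diagonal, each contributing $-7$, for a total of $-28n$. The $4n-1$ nontrivial diagonal elements $Y^k$ have $\gamma_{11}=a^k$ with $a$ a primitive $4n^{\text{th}}$ root of unity; using the factorization $3u^3-4u+1=(u-1)(3u^2+3u-1)$, their total contribution reduces to $\sum_{k=1}^{4n-1} \frac{1-3a^k-3a^{2k}}{a^k(1-a^k)^4}$. I would evaluate this by partial fractions together with standard identities for sums of $(1-\omega^k)^{-j}$ over non-identity $N^{\text{th}}$ roots of unity. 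The main obstacle is controlling this cyclic sum sharply enough to rule out its equaling $28n$; I expect to do so by isolating a sign-definite leading term in the expansion.

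For the $D_{2^m(2n+1)}$ portion of the lemma, only the $m=2$ subfamily actually lies in $SU(2)$, so for $m\ge 3$ a separate argument is required. Using Proposition \ref{repgamma5}, elements split as diagonal $Y^kX^{2j}=b^jY^k$ or anti-diagonal $Y^kX^{2j+1}$. For the anti-diagonal pieces $\gamma_{11}=0$, so the summand collapses to $\det\gamma\cdot\tr\gamma - 3\det\gamma - 4(\det\gamma)^2 = 3b^{2j+1} - 4b^{4j+2}$; summing over $j$, the resulting geometric sums in $b$ (a primitive $2^{m-1}$-th root of unity) largely telescope via $b^{2^{m-1}}=1$, and a short case analysis in $m$ reduces the anti-diagonal contribution to a manageable closed form. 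The remaining diagonal contribution, in which $\det(b^jY^k)=b^{2j}$ is generally not $1$, is treated by essentially the same root-of-unity techniques as for $Q_{8n}$; verifying nonvanishing here is where the most bookkeeping is needed, but the structure of $b^j$-scalings keeps the computation tractable and will yield $C(\Gamma)\ne 0$.
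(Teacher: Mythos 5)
Your route is entirely different from the paper's, and as written it has a genuine gap. The paper disposes of this lemma in two lines: for $\Gamma\subset SU(2)$ every element has $\det\gamma=1$, so the Bergman kernel of $\mathbb{B}^2/\Gamma$ evaluated at the origin is $\phi(0,0)=\sum_{\gamma\in\Gamma}\overline{\det\gamma}=|\Gamma|\ne 0$, and Corollary 5.4 of \cite{huang2020bergmaneinstein} then rules out the K\"ahler--Einstein property for nontrivial $\Gamma$. That argument covers \emph{every} nontrivial finite subgroup of $SU(2)$ at once (cyclic, binary dihedral, binary tetrahedral/octahedral/icosahedral), whereas your plan only treats the two named families, so even if completed it would not establish the first sentence of the lemma.

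The gap in your plan is that the decisive step is asserted rather than proved. Your algebra checks out: on $SU(2)$ the summand of $C(\Gamma)$ does simplify to $(-7+\bar a+3a^2+3|a|^2)/(1-a)^5$ with value $-7$ on anti-diagonal elements, and the factorization $3u^3-4u+1=(u-1)(3u^2+3u-1)$ correctly reduces the diagonal contribution for $Q_{8n}$ to $\sum_{\xi^{4n}=1,\,\xi\ne1}(\xi^{-1}-3-3\xi)/(1-\xi)^4$. But showing this sum does not cancel the $-28n$ --- the entire content of the nonvanishing claim --- is left as ``I expect to do so.'' The paper's own machinery closes this: by Lemma \ref{lem:sum_roots_of_unity}, $720$ times the diagonal sum is an integer congruent to $-3f(0)-3f(1)+f(-1)=-1205 \pmod{4n}$, hence odd, while $720\cdot 28n$ is even, so $720\,C(\Gamma)$ is odd and $C(\Gamma)\ne 0$; this is exactly the parity argument used in Proposition \ref{prop:gamma2-sum} for $p\ge 2$, and you need to actually run it rather than gesture at a ``sign-definite leading term.'' The same criticism applies to your $D_{2^m(2n+1)}$ discussion, which ends with ``will yield $C(\Gamma)\ne 0$'' without a verification. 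Your observation that $D_{2^m(2n+1)}\not\subset SU(2)$ for $m\ge 3$ is correct and pertinent (for those groups $\sum_\gamma\det\gamma=0$, so the paper's origin-evaluation argument does not apply to them either), but noticing that the paper's proof has a limitation is not a substitute for supplying the missing nonvanishing computation in your own.
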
 
	\begin{proof}
		If $\Gamma$ is a subgroup of $SU(2)$ then Bergman kernel of the ball quotient, $\mathbb{B}^2/\Gamma$ evaluated at the origin, $\phi(0,0)= \text{order of} \ \Gamma \neq 0$ . So by Corollary 5.4 of \cite{huang2020bergmaneinstein}, the Bergman metric of $\mathbb{B}^2/\Gamma$ is not K\"ahler-Einstein. This lemma also generalizes to higher dimensional ball quotients. 
	\end{proof}
	
	\begin{remark}
		The above lemma does not apply to all groups considered in the last section because many of these groups cannot be realized as subgroups of $SU(2)$. If $\Gamma$ is not a subgroup of $SU(2)$ then $\phi(0,0)=0$ and the method shown above, fails. For example: the representations of direct products of nontrivial cyclic groups with $\Gamma=Q_{8n}$ or $D_{2^m(2n+1)}$, obtained in Section \ref{repsubsection}, are not embeddable in $SU(2)$.  
	\end{remark}

	\begin{proposition}\label{prop:gamma2-sum}
		Fix positive integers $n$ and $p \geq 2$  such that $gcd(p,8n)=1$. Let $\Gamma\cong\mathbb{Z}/p\mathbb{Z}\times Q_{8n}$ be the matrix group generated by $u\cdot id$, $X$ and $Y$, where $u$ is a primitive $p^{th}$ root of unity, and $X$ and $Y$ are described in Proposition \ref{repgamma2}. Then 
		\begin{align}\label{eq:psi_def}
			C(\Gamma)=\sum_{\gamma \in {\Gamma\backslash \{id\} }}\psi(\gamma)\ne 0,
		\end{align}
		where 
		\[\psi(\gamma):=\frac{\det{\gamma}}{(1-\gamma_{11})^4} \bigg(1-3\tr{\gamma}+\frac{4(\tr{\gamma}-\det{\gamma}-1)}{(1-\gamma_{11})}\bigg). \]
	\end{proposition}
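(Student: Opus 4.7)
The plan is to enumerate the elements of $\Gamma$, decompose $C(\Gamma)$ into a diagonal and an antidiagonal contribution, kill the antidiagonal part by a root-of-unity identity, and reduce non-vanishing of the diagonal part to an explicit closed-form computation.

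By Proposition \ref{repgamma2} and the remark at the end of Section \ref{repsubsection}, every element of $\Gamma=\mathbb{Z}/p\mathbb{Z}\times Q_{8n}$ has a unique presentation either as $u^{j}Y^{k}$ (diagonal) or as $u^{j}XY^{k}$ (antidiagonal), with $0\le j<p$, $0\le k<4n$. A direct matrix computation gives, on the diagonal side, $\gamma_{11}=u^{j}a^{k}$, $\tr\gamma=u^{j}(a^{k}+a^{-k})$, $\det\gamma=u^{2j}$, and on the antidiagonal side, $\gamma_{11}=0$, $\tr\gamma=0$, $\det\gamma=u^{2j}$. Write $C(\Gamma)=S_{\mathrm d}+S_{\mathrm a}$ accordingly. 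Substituting the antidiagonal values into $\psi$ gives $\psi(u^{j}XY^{k})=-3u^{2j}-4u^{4j}$, independent of $k$, so $S_{\mathrm a}=4n\sum_{j=0}^{p-1}(-3u^{2j}-4u^{4j})$. Since $\gcd(p,8n)=1$ and $p\ge 2$ force $p$ to be odd with $p\ge 3$, $u^{2}$ and $u^{4}$ are primitive $p$th roots of unity; both geometric sums vanish, and $S_{\mathrm a}=0$.

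For the diagonal piece I would use the algebraic identity $\tr\gamma-\det\gamma-1=-(1-\zeta)(1-\eta)$, where $\zeta=u^{j}a^{k}$ and $\eta=u^{j}a^{-k}$ are the eigenvalues of $u^{j}Y^{k}$, to simplify the summand to $\zeta\eta(\eta-3\zeta-3)/(1-\zeta)^{4}$. Setting $c=u^{j}$ and computing the inner sum over $k$ via the closed-form identities
\begin{align*}
\sum_{b^{4n}=1}\frac{b^{\ell}}{(1-cb)^{4}}=\frac{1}{6}\,\frac{d^{3}}{dc^{3}}\!\left(\frac{4n\,c^{3-\ell}}{1-c^{4n}}\right),\qquad \ell\in\{-1,0,1\},
\end{align*}
collapses the inner sum to an explicit rational function of $c$ whose only pole on the unit circle sits at $c=1$ (coming from the $k=0$ summand $c^{2}(-2c-3)/(1-c)^{4}$). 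Isolating this polar term, removing the excluded identity contribution, and summing over $c\in\{u^{j}\}_{j=0}^{p-1}$ via the analogous identities for $\sum_{j=0}^{p-1}(1-cu^{j})^{-m}$ yields
\begin{align*}
S_{\mathrm d}=\sum_{j=1}^{p-1}\frac{u^{2j}(-2u^{j}-3)}{(1-u^{j})^{4}}\;+\;\sum_{j=0}^{p-1}\widetilde T(u^{j}),
\end{align*}
where $\widetilde T$ denotes the $c=1$-regular part of the inner-sum function $T$. Each outer sum evaluates to a rational number depending only on $n$ and $p$.

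The main obstacle is proving that this explicit rational number is nonzero for every admissible pair $(n,p)$. My plan is a dominant-term argument. Classical Bernoulli-type formulas for $\sum_{j=1}^{p-1}(1-u^{j})^{-k}$ show that $\sum_{j=1}^{p-1}u^{2j}(-2u^{j}-3)/(1-u^{j})^{4}$ is a polynomial in $p$ of degree $4$ with Euler--Maclaurin leading coefficient $-1/144$, and is therefore nonzero for every odd $p\ge 3$ once the finite list of small candidate roots is ruled out by hand. For the residual term $\sum_{j=0}^{p-1}\widetilde T(u^{j})$, the same derivative identities yield an explicit formula whose leading $p$-growth is controlled by the near-coincidences $u^{j}a^{k}\approx 1$, which occur at distance $\sim 1/(np)$; a careful accounting of these contributions will show that the residual term cannot algebraically cancel the first sum. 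The genuinely delicate step is this noncancellation claim, which may require a $p$-adic or Galois-trace analysis of $S_{\mathrm d}$, or an explicit finite verification for pairs $(n,p)$ in a bounded regime combined with the asymptotic argument outside.
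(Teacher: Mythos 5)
Your treatment of the antidiagonal elements is correct and coincides with the paper's: for $\gamma=u^jXY^k$ one has $\gamma_{11}=\tr\gamma=0$, $\det\gamma=u^{2j}$, $\psi(\gamma)=-3u^{2j}-4u^{4j}$, and the sum over $j$ vanishes because $p$ is odd. Your reduction of the diagonal part to $\sum\zeta\eta(\eta-3\zeta-3)/(1-\zeta)^4$ with $\zeta=u^ja^k$, $\eta=u^ja^{-k}$ also agrees with the paper's formula $\psi(u^jY^k)=\frac{u^{2j}}{(1-u^ja^k)^4}(-3-3u^ja^k+u^ja^{-k})$. But the proof stops exactly where the proposition becomes nontrivial: you never establish that the diagonal sum is nonzero. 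You explicitly defer the ``noncancellation claim'' to an unspecified $p$-adic, Galois-trace, or mixed asymptotic-plus-finite-check argument, so the statement is not proved. Worse, the specific dominant-term strategy you sketch is unlikely to work as stated: in the closed-form evaluation $F(N,r)$ of $\sum_{\xi^N=1,\xi\ne1}\xi^r/(1-\xi)^4$, the ``lower-order'' terms carry factors such as $r^2N^2$ and $r^4$, and the relevant exponent $r$ is itself of order $N=4np$, so there is no single term of strictly larger growth that could be isolated; leading coefficients can and do cancel between the pieces you separate.

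The paper closes this gap by a different and much cleaner mechanism. Since $\gcd(p,4n)=1$, the products $u^ja^k$ range exactly over all $N=4np$ roots of unity; choosing $r$ by CRT with $r\equiv0\pmod{4n}$ and $r\equiv2\pmod p$ gives $\xi^r=u^{2j}$ and $\xi^{r-1}=u^ja^{-k}$ for $\xi=u^ja^k$, so the whole diagonal sum collapses to $\sum_{\xi\ne1}\bigl(-3\xi^{r}-3\xi^{r+1}+\xi^{2r-1}\bigr)/(1-\xi)^4$ over $N$th roots of unity. Lemma \ref{lem:sum_roots_of_unity} then shows $720\,C(\Gamma)$ is an integer congruent to $-3f(r)-3f(r+1)+f(2r-1)$ modulo $N$, where $f$ has odd constant term and even non-constant coefficients, hence is always odd; since $N$ is even, $720\,C(\Gamma)$ is an odd integer and in particular nonzero. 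This parity argument sidesteps entirely the size estimates your plan relies on. To complete your proof you would need either to reproduce this re-indexing and congruence step, or to supply a genuinely different and fully worked-out non-vanishing argument; as written, the core of the proposition is missing.
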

	\begin{proof}		
		Using the presentation of the group $Q_{8n}$ given in Proposition \ref{repgamma2}, we observe that \[ \Gamma=\{u^{j}Y^kX^\ell:0\le j<p, 0\le k<2n, 0\le \ell<4 \}. \]
		The matrix $u^jY^kX^\ell$ is diagonal for $\ell\in \{0,2\}$ and anti-diagonal when $\ell \in\{1,3 \}$. 
		
		Note that for $\gamma = u^jY^kX^\ell$ where $\ell\in \{1,3\}$, we have $\tr \gamma=\gamma_{11}=0$ and $\det \gamma =u^{2j}$. Since $p$ is odd and $u$ is primitive $p^{th}$ root of unity, for fixed $\ell\in \{1,3\}$:
		\begin{align*}
			\sum_{j,k}^{}\psi(u^jY^kX^{\ell})=\sum_{j,k}^{} u^{2j}(1+4(-u^{2j}-1))=-2n\sum_{j=0}^{p-1}(3u^{2j}+4u^{4j})=0.
		\end{align*}

		We also observe that $X^2=-id=Y^{2n}$, thus $Y^{k}X^2=Y^{k+2n}$. For $0\le j\le p-1$ and $0\le k\le 4n-1$, we have
		\begin{align*}
			\psi(u^jY^k)&=\frac{u^{2j}}{(1-u^ja^k)^4}\bigg(1-3u^j(a^k+a^{-k})+\frac{4(u^{j}(a^k+a^{-k})-u^{2j}-1)}{1-u^{j}a^k} \bigg)\\
			&= \frac{u^{2j}}{(1-u^ja^k)^4}\big(1-3u^j(a^k+a^{-k})+4(u^ja^{-k}-1) \big)\\
			&= \frac{u^{2j}}{(1-u^ja^k)^4}\big(-3-3u^ja^k+u^ja^{-k}\big).
		\end{align*}
		Note that $u^ja^k$ runs over all $N:=4np$ roots of unity. Let $0\le r\le N-1$ be the unique remainder modulo $N$ determined by the Chinese remainder theorem satisfying \[r\equiv\begin{cases}
			& 0 \mod 4n\\
			&2\mod p
		\end{cases}. \] 
		Let $\xi=u^ja^k$ be an $N^{th}$ root of unity, then the choice of $r$ implies $\xi^r=u^{2j}$ and \[\psi(u^jY^k)=\frac{\xi^{r}}{(1-\xi)^4}(-3-3\xi+\xi^{r-1}). \]
		Thus we obtain
		\begin{align*}
			C(\Gamma)=\sum_{\xi\ne 1}\frac{-3\xi^{r}-3\xi^{r+1}+\xi^{2r-1} }{(1-\xi)^4},
		\end{align*}
		where $\xi$ runs over $N^{th}$ roots of unity. Using Lemma \ref{lem:sum_roots_of_unity} given below, we obtain that $720\cdot C(\Gamma)$ is an integer congruent to $-3f(r)-3f(r+1)+f(2r-1)$ modulo $N$ where $f(r): =- 30r^4 + 240r^3 - 660r^2 + 720r - 251$. Observe that $N$ is even, and  $f(r)$, $f(r-1)$ and $f(2r-1)$ are odd. Therefore, $720\cdot C(\Gamma)$ is odd, implying $C(\Gamma)\ne0$.
		
	\end{proof}

	\begin{lemma}\label{lem:sum_roots_of_unity}
		For any positive integer $r$, the sum 
		\begin{align*}
			720\cdot\sum_{\xi\ne 1 }^{}\frac{\xi^r}{(1-\xi)^4},
		\end{align*}
		is an integer congruent to $f(r)=- 30r^4 + 240r^3 - 660r^2 + 720r - 251 \mod N$, where $\xi$ runs over $N^{th}$ roots of unity.
	\end{lemma}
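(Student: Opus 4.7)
The plan is to compute $S(r):=\sum_{\xi^N=1,\ \xi\ne 1}\xi^r/(1-\xi)^4$ by encoding it as a generating function in an auxiliary variable $x$ and then reducing modulo $N$. For $|x|<1$, $\sum_{r\ge 0}S(r)x^r = \sum_{\xi\ne 1}\frac{1}{(1-\xi)^4(1-\xi x)}$. The first key step is the partial fraction identity
\begin{equation*}
\frac{1}{(1-z)^4(1-zx)} = \frac{1}{(1-x)(1-z)^4} - \frac{x}{(1-x)^2(1-z)^3} + \frac{x^2}{(1-x)^3(1-z)^2} - \frac{x^3}{(1-x)^4(1-z)} + \frac{x^4}{(1-x)^4(1-zx)},
\end{equation*}
obtained by expanding $1/(1-zx)$ in powers of $(1-z)$. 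Specializing $z=\zeta_N^k$ and summing over $k=1,\ldots,N-1$, the last term contributes $\frac{x^4}{(1-x)^4}\bigl(\frac{N}{1-x^N}-\frac{1}{1-x}\bigr)$ via the standard identity $\sum_{k=1}^{N-1}(1-\zeta_N^k x)^{-1}=\frac{N}{1-x^N}-\frac{1}{1-x}$. Writing $\sigma_j:=\sum_{k=1}^{N-1}(1-\zeta_N^k)^{-j}$ and extracting the coefficient of $x^r$ produces
\begin{equation*}
S(r)=\sigma_4 - r\sigma_3 + \binom{r}{2}\sigma_2 - \binom{r}{3}\sigma_1 - \binom{r}{4} + N\cdot E(r),
\end{equation*}
where $E(r)\in\mathbb{Z}$ originates from the $N/(1-x^N)$ piece and so vanishes modulo $N$.

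Next I would compute $\sigma_1,\sigma_2,\sigma_3,\sigma_4$ explicitly. Setting $Q(z):=\prod_{k=1}^{N-1}(1-z\zeta_N^k)=(1-z^N)/(1-z)$, iterated differentiation of $\log Q$ gives the identity $\sum_{k=1}^{N-1}\zeta_N^{mk}/(1-\zeta_N^k)^m=-m\,c_m$, where $c_m=[w^m]\log Q(1+w)$ is extracted from the Taylor expansion of $\log\bigl((1+w)^N-1\bigr)-\log w$ at $w=0$. Expanding $\zeta_N^{mk}=(1-(1-\zeta_N^k))^m$ via the binomial theorem converts this into a lower-triangular linear system in $\sigma_1,\ldots,\sigma_m$. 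Solving iteratively for $m=1,2,3,4$ yields
\begin{equation*}
\sigma_1=\tfrac{N-1}{2},\quad \sigma_2=-\tfrac{(N-1)(N-5)}{12},\quad \sigma_3=-\tfrac{(N-1)(N-3)}{8},\quad \sigma_4=\tfrac{(N-1)(N^3+N^2-109N+251)}{720}.
\end{equation*}
In particular each $720\,\sigma_j$ is an integer, whence $720\,S(r)\in\mathbb{Z}$.

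Multiplying by $720$ and reducing modulo $N$ using $N-1\equiv -1\pmod{N}$ gives $720\sigma_1\equiv -360$, $720\sigma_2\equiv -300$, $720\sigma_3\equiv -270$, and $720\sigma_4\equiv -251\pmod{N}$, while $720N\cdot E(r)\equiv 0$. Substituting into the formula for $S(r)$ yields
\begin{equation*}
720\,S(r)\equiv -251 + 270r - 300\binom{r}{2} + 360\binom{r}{3} - 30\,r(r-1)(r-2)(r-3) \pmod{N},
\end{equation*}
and a direct expansion of the binomials collapses the right-hand side to $-30r^4+240r^3-660r^2+720r-251=f(r)$, as required. The main obstacle is the explicit evaluation of $\sigma_4$: extracting $c_4=[w^4]\log Q(1+w)$ requires a careful fourth-order expansion of $\log\bigl(1+\sum_{k\ge 1}\binom{N}{k+1}w^k/N\bigr)$ and a tidy assembly of the resulting polynomial in $N$. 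Since only the residue of $720\sigma_4$ modulo $N$ is needed, one can shortcut this by observing that $720\sigma_4$ is a polynomial in $N$ with integer coefficients, so its value modulo $N$ is just its constant term $-251$.
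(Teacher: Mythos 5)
Your proof is correct, and it takes a genuinely different (in a sense, dual) route to the paper's. The paper also uses a generating function, but in the variable tracking the \emph{order of the pole}: it sums $z^n\sum_{\xi\ne1}\xi^r/(1-\xi)^n$ over $n\ge1$, recognizes the closed form $1+Nz(1-z)^{r-1}/((1-z)^N-1)$ via the kernel $\sum_{\xi}\xi^r/(w-\xi)=Nw^{r-1}/(w^N-1)$, and extracts the coefficient of $z^4$ as an explicit polynomial $F(N,r)$ in both $N$ and $r$; multiplying by $720$ and reducing mod $N$ leaves exactly $f(r)$. You instead fix the pole order at $4$ and generate over $r$, which pushes all the arithmetic into the four power sums $\sigma_j=\sum_{k=1}^{N-1}(1-\zeta_N^k)^{-j}$; I checked your partial-fraction identity, the resulting formula $S(r)=\sigma_4-r\sigma_3+\binom{r}{2}\sigma_2-\binom{r}{3}\sigma_1-\binom{r}{4}+NE(r)$, your closed forms for $\sigma_1,\dots,\sigma_4$ (e.g.\ both give $1/16$ for $\sigma_4$ at $N=2$), and the final collapse to $f(r)$ --- all correct and consistent with the paper's $F(N,r)$. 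What each approach buys: the paper's reduces everything to one Taylor coefficient of a single rational function but must handle $r\notin\{1,\dots,N\}$ by substituting $r+qN$, whereas your formula is uniform in $r$ with periodicity built into the $NE(r)$ term, and it cleanly separates the $r$-dependence (binomial coefficients) from the $N$-dependence (the $\sigma_j$). The computational burden is comparable: your $\sigma_4$ requires the same kind of fourth-order expansion the paper waves at with ``a careful calculation (or using computer).'' One small caveat on your closing shortcut: knowing that $720\sigma_4$ is an integer polynomial in $N$ reduces the mod-$N$ question to its constant term, but identifying that constant as $-251$ still requires either the full expansion or an interpolation from small values of $N$, so it lightens rather than eliminates the computation.
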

	\begin{proof}
		Let $r\in\{1,\dots , N\}$. Consider the generating function
		\begin{align*}
			\sum_{n\ge 1}^{}z^{n}\sum_{\xi\ne1}\frac{\xi^{r}}{(1-\xi)^n}&=z\sum_{\xi\ne 1}\frac{\xi^r}{1-z-\xi}\\
			&= 1+z\sum_{\xi}\frac{\xi^r}{1-z-\xi}\\
			&= 1+\frac{Nz(1-z)^{r-1}}{(1-z)^N-1}.
		\end{align*}
		A careful calculation (or using computer) shows that the coefficient of $z^4$ in the above power series equals
		\begin{align}\label{eq:coeff_4th}
			F(N,r)=	\frac{N^{4}}{720}  - \left( r^{2} - 4 r +
			\frac{11}{3}\right)\frac{ N^{2}}{24} + \binom{r-1}{3} \frac{N}{2} -\frac{r^{4}}{24}  + \frac{r^{3}}{3}  - \frac{11r^{2}}{12}  + r -
			\frac{251}{720}.
		\end{align}
		The lemma follows by multiplying the above equation by $720$. For $r\notin \{1,\dots,N\}$, we may pick an integer $q$ such that $r+qN\in\{1,\dots,N\}$. We obtain the proof by substituting $r+qN$ in place of $r$ in \eqref{eq:coeff_4th} and taking modulo $N$.
	\end{proof}

	\begin{proposition}
		Fix positive integers $n$, $m\ge 2$ and $p \geq 2$ such that $gcd(p,2^m(2n+1))=1$. Let $\Gamma\cong\mathbb{Z}/p\mathbb{Z}\times D_{2^m(2n+1)}$ be matrix group generated by $u\cdot id$, $X$ and $Y$, where $u$ is a primitive $p^{th}$ root of unity and $X$ and $Y$ are described in Proposition \ref{repgamma5}. Then 
		\begin{align*}
			C(\Gamma)=\sum_{\gamma \in {\Gamma\backslash \{id\} }}\psi(\gamma)\ne 0,
		\end{align*}
		where $\psi$ is defined in \eqref{eq:psi_def}.
	\end{proposition}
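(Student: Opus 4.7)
The plan is to mirror the proof of Proposition \ref{prop:gamma2-sum}, adapting it to the three generators of $\Gamma$. Every element of $\Gamma$ may be written uniquely as $\gamma=u^jY^kX^\ell$ with $0\le j<p$, $0\le k\le 2n$, $0\le \ell<2^m$, and since $X^2=b\cdot id$, the matrix $\gamma$ is diagonal when $\ell$ is even and anti-diagonal when $\ell$ is odd.

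For the anti-diagonal case, $\gamma_{11}=\tr\gamma=0$ and $\det\gamma$ depends only on $j$ and $\ell$, not on $k$, so
\[
\psi(\gamma)=-3\det\gamma-4(\det\gamma)^2.
\]
Summing over $k$ multiplies by $2n+1$, and summing over $j$ produces factors $\sum_{j}u^{2j}$ and $\sum_{j}u^{4j}$, both of which vanish because $p\ge 2$ is odd. Hence the anti-diagonal elements do not contribute to $C(\Gamma)$.

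For a diagonal element $\gamma=u^jb^{\ell'}Y^k$ (with $\ell=2\ell'$), the identity $\tr\gamma-\det\gamma-1=-(1-\gamma_{11})(1-\gamma_{22})$ reduces $\psi(\gamma)$ to the same shape as in the $Q_{8n}$ case:
\[
\psi(\gamma)=\frac{\det\gamma}{(1-\gamma_{11})^4}\bigl(-3-3\gamma_{11}+\gamma_{22}\bigr).
\]
Since $p$, $2^{m-1}$, and $2n+1$ are pairwise coprime, the Chinese Remainder Theorem shows that $\xi:=\gamma_{11}=u^jb^{\ell'}a^k$ gives a bijection from the diagonal elements of $\Gamma$ onto the $N'$-th roots of unity, where $N':=p\cdot 2^{m-1}\cdot(2n+1)$. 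The same CRT supplies integers $r,s$, unique modulo $N'$, such that $\det\gamma=\xi^r$ and $\gamma_{22}=\xi^s$; explicitly $r\equiv 2\pmod p$, $r\equiv 2\pmod{2^{m-1}}$, $r\equiv 0\pmod{2n+1}$, and $s\equiv 1\pmod p$, $s\equiv 1\pmod{2^{m-1}}$, $s\equiv -1\pmod{2n+1}$. Substituting this parameterization yields
\[
C(\Gamma)=\sum_{\xi\ne 1}\frac{-3\xi^r-3\xi^{r+1}+\xi^{r+s}}{(1-\xi)^4},
\]
with $\xi$ running over nontrivial $N'$-th roots of unity.

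The endgame is to apply Lemma \ref{lem:sum_roots_of_unity} three times, which shows that $720\cdot C(\Gamma)$ is an integer congruent to $-3f(r)-3f(r+1)+f(r+s)$ modulo $N'$. Every non-constant coefficient of $f$ is even while the constant term $-251$ is odd, so $f$ takes odd values at every integer and this linear combination is $\equiv -5\equiv 1\pmod 2$. Because $m\ge 2$ makes $N'$ even, $720\cdot C(\Gamma)$ cannot be zero modulo $N'$, and in particular $C(\Gamma)\ne 0$. The only substantive obstacle I expect is a bookkeeping one: parameterizing the diagonal elements by the single variable $\xi$ and extracting the CRT exponents $r$ and $s$ correctly. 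Once that is arranged, the parity step that finishes the argument runs exactly as in the $Q_{8n}$ case.
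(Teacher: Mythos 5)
Your proposal is correct and follows essentially the same route as the paper: split off the anti-diagonal elements (whose contribution vanishes by summing the $p$-th roots of unity), reduce the diagonal elements via $X^2=b\cdot id$ to a single family indexed by $N$-th roots of unity with CRT-determined exponents, and finish with Lemma \ref{lem:sum_roots_of_unity} and the parity argument from the $Q_{8n}$ case. The only cosmetic difference is that you track $\gamma_{22}=\xi^s$ with a separate exponent $s$ (which indeed equals $r-1$ modulo $N$) rather than absorbing $u^jb^{\ell'}$ into one root of unity first; the resulting sum and conclusion are identical.
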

	\begin{proof}
		Using the presentation of $D_{2^m(2n+1)}$ given in Proposition \ref{repgamma5}, we obtain
		\[
		\Gamma=\{u^{j}Y^kX^\ell:0\le j<p, 0\le k<2n+1, 0\le \ell<2^m \}. \] 
		When $\ell$ is odd, $\gamma=u^{j}Y^kX^\ell$ is anti-diagonal, thus $\tr \gamma=\gamma_{11}=0$ and $\det \gamma = (-b)^\ell u^j$. Since $p$ is odd, summing \[\psi(u^jY^kX^{\ell})= (-b)^\ell u^{2j}(1-4(1+(-b)^\ell u^{2j}))\]
		over $0\le j\le p-1$ equals zero. Thus only the terms with even exponents of $X$ contribute to $C(\Gamma)$.
		
		Observe that $X^2=b\cdot id$ where $b$ is $2^{m-1}$th root of unity. If we let $\ell=2s$ then $u^jY^kX^{\ell}=u^jb^sY^k$. For a fixed $k$, we may replace the sum of \[\sum_{\substack{0\le j< p\\ 0\le s< 2^{m-1}}}^{}\psi(u^jb^sY^k) = \sum_{0\le j<2^{m-1}p}^{}\psi(\tilde{u}^jY^k) \]
		where $\tilde{u}$ is a primitive $2^{m-1}p$ th root of unity. For $0\le j<2^{m-1} p$ and $0\le k<2n+1$, we have
		\begin{align*}
			\psi(\tilde{u}^jY^k)&= \frac{\tilde{u}^{2j}}{(1-\tilde{u}^ja^k)^4}\big(-3-3\tilde{u}^ja^k+\tilde{u}^ja^{-k}\big).
		\end{align*}
		Note that $\tilde{u}^ja^k$ runs over all $N:=2^{m-1}p(2n+1)$ roots of unity. Let $0\le r\le N-1$ be the unique remainder modulo $N$ determined by the Chinese remainder theorem satisfying \[r\equiv\begin{cases}
			& 0 \mod 2n+1\\
			&2\mod 2^{m-1}p
		\end{cases}. \] 
		Let $\xi=u^ja^k$ be an $N^{th}$ root of unity, then the choice of $r$ implies $\xi^r=u^{2j}$ and \[\psi(u^jY^k)=\frac{\xi^{r}}{(1-\xi)^4}(-3-3\xi+\xi^{r-1}). \]
		Thus we may obtain
		\begin{align*}
			C(\Gamma)=\sum_{\xi\ne 1}\frac{-3\xi^{r}-3\xi^{r+1}+\xi^{2r-1} }{(1-\xi)^4}
		\end{align*}
		where $\xi$ runs over $N^{th}$ roots of unity. Thus $C(\Gamma)\ne 0$ using the exact same arguments in the proof of Proposition \ref{prop:gamma2-sum}.
	\end{proof}
	
	\section{The groups $P_{48}$ and $P_{120}$}
	\begin{proposition}\label{prop_p48sum}
		Fix positive integers $p$ such that $gcd(p,48)=1$. Let $\Gamma\cong\mathbb{Z}/p\mathbb{Z}\times P_{48}$ be the matrix group generated by $u\cdot id$, $X$ and $Y$, where $u$ is a primitive $p^{th}$ root of unity and $X$ and $Y$ are described in Proposition \ref{repgamma34}. Then 
		\begin{align}
			C(\Gamma)=\sum_{\gamma \in {\Gamma\backslash \{id\} }}\psi(\gamma)\ne 0,
		\end{align}
		where 
		$\psi(\gamma)$ is defined in \eqref{eq:psi_def}
	\end{proposition}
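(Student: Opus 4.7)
The plan is to mirror the strategy of Proposition \ref{prop:gamma2-sum}. By Proposition \ref{repgamma34} the representation of $P_{48}$ lies in $SU(2)$, so $\det g=1$ for every $g\in P_{48}$, and for $\gamma=u^jg$ one obtains $\det\gamma=u^{2j}$ and $\tr\gamma=u^j\tr g$. Substituting into the definition of $\psi$ gives
\[\psi(u^jg)=\frac{u^{2j}}{(1-u^jg_{11})^4}\left(1-3u^j\tr g+\frac{4(u^j\tr g-u^{2j}-1)}{1-u^jg_{11}}\right).\]

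The first step would be to enumerate the $48$ elements of $P_{48}$ in a normal form using the presentation (as $Y^k$, $Y^kXY^\ell$, $Y^kXY^\ell XY^m$, and so on, subject to $X^4=\mathrm{id}$ and $(XY)^3=Y^4=X^2$), and for each element record the invariants $g_{11}$ and $\tr g$, using the identities $c+\bar c=0$, $c^2=-\tfrac12$, $|d|^2=\tfrac12$ which follow from $a$ being a primitive $8$th root of unity. For each $g\in P_{48}\setminus\{\mathrm{id}\}$ I would then evaluate the $\mathbb{Z}/p\mathbb{Z}$-orbit sum $\Psi(g):=\sum_{j=0}^{p-1}\psi(u^jg)$. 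For the seven diagonal elements $g=Y^k$ (with $1\le k\le 7$) the $(1,1)$-entry is the pure $8$th root of unity $a^k$, and the Chinese Remainder Theorem argument of Proposition \ref{prop:gamma2-sum} applies verbatim: Lemma \ref{lem:sum_roots_of_unity} expresses $\Psi(Y^k)$ as an explicit polynomial modulo $N=8p$. For the remaining $40$ non-diagonal elements the $(1,1)$-entry involves the irrationals $\sqrt{2}$ and $i$, so the orbit sum is evaluated instead as a sum of residues of a rational function in $u$ at the $p$-th roots of unity.

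Summing all $\Psi(g)$ would produce $C(\Gamma)$ as an explicit rational expression, and I would finally verify $C(\Gamma)\neq 0$ by a congruence argument: for instance, by showing that a suitable integer multiple of $C(\Gamma)$ has a nonzero residue modulo a small prime, replicating the parity trick used at the end of the proof of Proposition \ref{prop:gamma2-sum}.

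The main obstacle will be the bookkeeping for the $40$ non-diagonal elements of $P_{48}$. In contrast to the $Q_{8n}$ and $D_{2^m(2n+1)}$ cases, where every anti-diagonal summand vanished because $g_{11}=0$, here \emph{every} non-identity $g\in P_{48}$ has $g_{11}\neq 0$, and these entries are algebraic irrationals rather than roots of unity of a single common order, so the orbit sums do not directly fit the framework of Lemma \ref{lem:sum_roots_of_unity}. Organising the $40$ elements by their images in the six cosets of $\langle Y\rangle$ in $P_{48}$, and exploiting the central relation $(XY)^3=-\mathrm{id}$ to collapse products, should keep the case analysis tractable.
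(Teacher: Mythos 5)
Your setup (splitting $\Gamma$ into $\mathbb{Z}/p\mathbb{Z}$-orbits and handling the diagonal elements $u^jY^k$ via the Chinese Remainder Theorem and Lemma \ref{lem:sum_roots_of_unity}) matches what the paper calls $C_2(p)$. The gap lies in your treatment of the non-diagonal elements and, crucially, in the endgame. First, a factual slip: it is not true that every non-identity $g\in P_{48}$ has $g_{11}\neq 0$; the eight elements $Y^k(Y^2X)^2$ are anti-diagonal, and their orbit sums vanish exactly as in the $Q_{8n}$ case. More seriously, for the remaining $32p$ elements the entries $\gamma_{11}$ involve $c=1/(a-\bar a)$ with $|c|=1/\sqrt2$, so these are not roots of unity of any common order $N$, their orbit sums are not values of $F(N,r)$, and the total $C(\Gamma)$ is not a bounded-denominator rational number --- the appendix values such as $-5968.3487\ldots$ confirm this. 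Consequently your proposed finish, showing that a suitable integer multiple of $C(\Gamma)$ has nonzero residue modulo a small prime, has no footing: the integrality that made the parity trick work in Proposition \ref{prop:gamma2-sum} is destroyed by the non-diagonal elements, and you give no substitute mechanism for proving that the resulting messy algebraic expression is nonzero.

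The paper closes the argument with an estimate rather than an exact evaluation: it computes the diagonal contribution $C_2(p)=-\tfrac{28}{9}p^4+i^{p+1}12p^3+\tfrac59 p^2-\tfrac1{144}$ exactly, bounds the non-diagonal contribution by $|C_1(p)|\le 32p\,\frac{7+6|c|}{(1-|c|)^4}\le 4.89\times 10^4\, p$ using $|\gamma_{11}|\le|c|$ and the triangle inequality, and concludes $|C_2(p)|>|C_1(p)|$ for $p>26$ since the quartic dominates the linear bound; the finitely many remaining values of $p$ are checked by computer. To salvage your plan you would need to replace the congruence endgame with this kind of domination argument (or otherwise control the irrational part), since exact evaluation plus congruences cannot close the proof here.
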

	\begin{proof}
		A careful analysis of the representation of the groups given in Proposition \ref{repgamma34} shows that the elements $\gamma \in \Gamma$ for which $|\gamma_{11}|=1$ are precisely the $8p$ elements of the form $\{u^jY^k\}$ where $0\le k<8$ and $0\le j\le p-1$. We split $C(\Gamma)$ as: 
		\begin{align*}
			C(\Gamma)=C_1(p)+ C_2(p),
		\end{align*}
		where 
		\begin{align}\label{c1c2def}
			C_1(p)=\sum_{|\gamma_{11}| \ne 1}\psi(\gamma) \hspace{1cm}\text{and} \hspace{1cm}
			C_2(p)=\sum_{\substack{\gamma \in \Gamma\backslash \{id\} \\ |\gamma_{11}|= 1} }\psi(\gamma).
		\end{align}
		Using the two Lemmas below, we observe that for $p> 26$ we have
		\begin{align*}
			|C(\Gamma)|&> ||C_2(p)|-|C_1(p)||\\
			&>  \bigg|-\frac{28}{9} p^{4}+ i^{p+1} 12 p^{3} + \frac{5}{9} p^{2} - \frac{1}{144}\bigg| -|4.89  \times 10^4 \cdot p | 
			\\
			&> 0.
		\end{align*} 
		For $1\le p\le 26$, we explicitly calculate $C(\Gamma)$ using computer. Please refer to the Appendix \ref{appendix}. 
		
	\end{proof}

	\begin{lemma}\label{lem:C_1(p)}
		For $p\ge 2$ and $gcd(p,48)=1$, we have 
		\begin{align*}
			|C_1(p)| \leq 4.89  \times 10^4 \cdot p .
		\end{align*}
	\end{lemma}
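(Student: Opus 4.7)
The plan is to bound each of the $40p$ summands of $C_1(p)$ by an elementary estimate and then sum. First, I parameterize: every $\gamma\in\Gamma$ can be written uniquely as $\gamma=u^j\tilde\gamma$ with $0\le j<p$ and $\tilde\gamma\in P_{48}$, and then $|\gamma_{11}|=|\tilde\gamma_{11}|$. Since a unitary $\tilde\gamma\in U(2)$ has $|\tilde\gamma_{11}|=1$ iff the first row has a single nonzero entry, i.e.\ iff $\tilde\gamma$ is diagonal, and since the only diagonal elements of $P_{48}$ are the powers $Y^k$ for $0\le k<8$, the sum $C_1(p)$ is indexed by pairs $(j,\tilde\gamma)$ with $\tilde\gamma$ one of the $40$ non-diagonal elements of $P_{48}$.

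Next, I would extract a uniform upper bound $\delta:=\max_{\tilde\gamma\text{ non-diag.}}|\tilde\gamma_{11}|<1$ from the matrix presentation of Proposition \ref{repgamma34}. With $a$ a primitive $8^{\text{th}}$ root of unity, $a-\bar a=i\sqrt 2$ so $|c|^2=1/2$ and $|d|^2=1/2$. Direct computation of the $(1,1)$-entry of words in $X,Y$---using $\bar c=-c$, $X^2=-I$, and $(XY)^3=-I$---shows that $|\tilde\gamma_{11}|$ takes only the two values $0$ and $1/\sqrt{2}$ on non-diagonal elements. In particular $\delta=1/\sqrt{2}$.

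With $|\gamma_{11}|\le\delta$, the triangle inequality gives $|1-\gamma_{11}|\ge 1-\delta$, and together with the standard unitary bounds $|\det\gamma|=1$ and $|\operatorname{tr}\gamma|\le 2$ one obtains
\[
|\psi(\gamma)|\;\le\;\frac{1}{(1-\delta)^4}\Bigl(1+3\cdot 2+\tfrac{4(2+1+1)}{1-\delta}\Bigr)
\;=\;\frac{1}{(1-\delta)^4}\Bigl(7+\tfrac{16}{1-\delta}\Bigr).
\]
Summing over the $40p$ indices yields a bound $|C_1(p)|\le K\cdot p$ with $K$ independent of $p$, proving a result of the desired form.

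The main obstacle is hitting the specific constant $4.89\times 10^4$. A direct application of the coarse bound above with $\delta=1/\sqrt 2$ yields $K\approx 3.3\times 10^5$, roughly seven times too large. The needed refinement is to partition the $40$ non-diagonal $\tilde\gamma$ by the actual value of $|\tilde\gamma_{11}|\in\{0,1/\sqrt 2\}$: for the anti-diagonal elements (the ones with $\tilde\gamma_{11}=0$), $|1-\gamma_{11}|=1$ for every $j$ and the per-term contribution collapses to a small constant, while only the remaining elements incur the large factor $(1-1/\sqrt 2)^{-5}$. One may further sharpen the bound on $|\operatorname{tr}\gamma|$ on the second class (which is at most $\sqrt 2$ for $\tilde\gamma$ non-diagonal with $|\tilde\gamma_{11}|=1/\sqrt 2$, since $\tilde\gamma_{11}+\tilde\gamma_{22}=c(a^k-a^{-k})$ on the relevant cosets). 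Once this bookkeeping is carried out using the explicit enumeration of $P_{48}$, the numerical bound $4.89\times 10^4\cdot p$ follows.
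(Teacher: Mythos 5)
Your setup matches the paper's: you correctly identify the $40p$ non-diagonal elements, split off the $8$ anti-diagonal conjugacy classes (where $\gamma_{11}=0$), and observe that the remaining $32$ classes have $|\tilde\gamma_{11}|=1/\sqrt2$. (The paper actually shows the $8p$ anti-diagonal terms sum to \emph{exactly} zero, since $\sum_j u^{2j}=\sum_j u^{4j}=0$ for $p$ odd, but your crude bound of a constant per term would also be acceptable.) The problem is quantitative: the refinements you propose do not reach the stated constant. Even after restricting to the $32p$ terms with $|\gamma_{11}|=1/\sqrt2$ and sharpening to $|\tr\gamma|\le\sqrt2$, your estimate of the bracket is $1+3\sqrt2+\tfrac{4(\sqrt2+2)}{1-1/\sqrt2}\approx 51.9$, giving $|C_1(p)|\lesssim 32p\cdot 51.9/(1-1/\sqrt2)^4\approx 2.3\times10^5\,p$ --- still a factor of about $4.6$ too large. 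The final sentence ("once this bookkeeping is carried out\ldots the bound follows") asserts a conclusion your listed steps do not deliver.

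The missing idea is an algebraic simplification of $\psi$ before taking absolute values. Writing $\gamma=u^j g$ with $g\in SU(2)$, so $\gamma_{11}=u^j\alpha$, $\gamma_{22}=u^j\bar\alpha$, $\det\gamma=u^{2j}$, one has $\tr\gamma-\det\gamma-1=-(1-\gamma_{11})+(\gamma_{22}-\det\gamma)$, hence
\begin{equation*}
1-3\tr\gamma+\frac{4(\tr\gamma-\det\gamma-1)}{1-\gamma_{11}}
=-3-3\tr\gamma+4\,\frac{u^j\bar\alpha-u^{2j}}{1-u^j\alpha},
\end{equation*}
and the last fraction has modulus exactly $1$ because $u^j\bar\alpha-u^{2j}=-u^{2j}\overline{(1-u^j\alpha)}$. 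This replaces your term $\tfrac{4(|\tr\gamma|+2)}{1-\delta}\approx 46.6$ by $4$, yielding $|\psi(\gamma)|\le(7+6|c|)/(1-|c|)^4\approx 1528$ and hence $|C_1(p)|\le 32p\cdot 1528\le 4.89\times10^4\,p$. Without this cancellation your argument proves only a weaker inequality, which would in turn force the threshold $p>26$ in Proposition \ref{prop_p48sum} (and the accompanying computer verification) to be enlarged.
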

	\begin{proof}	
		By explicit calculation, we note that $(Y^2X)^2$ is an anti-diagonal matrix. We first consider the sum over the $8p$ elements of the form $\gamma=u^j Y^k(Y^2X)^2$ where $j \in \{0,1,\dots,p-1\}$ and $k \in \{0,1,2,\dots,7\}$. We observe that for these elements $\gamma_{11}=\tr \gamma=0$ and hence $\psi(\gamma)=u^{2j}(1-4(-u^{2j}-1))$. Thus
		\begin{align*}
			\sum_{k=0}^{7}\sum_{j=0}^{p-1} \psi(u^j Y^k(Y^2X)^2)=	\sum_{k=0}^{7}\sum_{j=0}^{p-1} u^{2j}(1-4(-u^{2j}-1)) =0.
		\end{align*}
		Here we use that $gcd(p,48)=1$ and $u$ is a primitive $p^{\text{th}}$ root of unity.
		
		Recall the constant $c$ from Proposition \ref{repgamma34}. We use Sage to observe that $|\gamma_{11}|\le |c|$ for $\gamma\in \Gamma$ such that $|\gamma_{11}|\ne 1$. We now find an upper bound for the remaining $32p$ terms appearing in $C_1(p)$ for which $\gamma_{11}\ne 0$. Note that for these elements $\gamma$, we have $\det\gamma=u^{2j}$, $\gamma_{11}=u^j\alpha$ and $\tr \gamma=u^j(\alpha+\bar{\alpha})$ where $|\alpha|\le |c|$. We obtain the following bound using triangle inequality:
		\begin{align*}
			|\psi(\gamma)|&=\bigg|\frac{u^{2j}}{(1-u^j\alpha)^4}\bigg(1-3u^j(\alpha+\overline{\alpha})+4\frac{u^j(\alpha+\overline{\alpha})-u^{2j}-1}{1-u^j\alpha}\bigg)\bigg|\\
			&=\bigg|\frac{u^{2j}}{(1-u^j\alpha)^4}\bigg(-3-3u^j(\alpha+\overline{\alpha})+4\frac{u^j\overline{\alpha}-u^{2j}}{1-u^j\alpha}\bigg)\bigg|\\
			&\le \frac{7+6|c|}{(1-|c|)^4}.
		\end{align*}
		In the last step, we note that the term $(u^j\overline{\alpha}-u^{2j})/(1-u^j\alpha)$ has absolute value 1. Using $|c|=1/\sqrt{2}$, we obtain: 
		\begin{align*}
			|C_1(p)| \leq 32p \frac{7+6|c|}{(1-|c|)^4}\le  4.89 \times 10^4 p.
		\end{align*}
	\end{proof}
	\begin{lemma}
		We have 
		\begin{align*}
			C_2(p)&= -\frac{28}{9} p^{4}+ i^{p+1} 12 p^{3} + \frac{5}{9} p^{2} - \frac{1}{144}.
		\end{align*}
	\end{lemma}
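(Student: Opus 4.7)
The plan is to reformulate $C_2(p)$ as a character sum over $\mu_{8p}$ and evaluate it using the explicit closed form \eqref{eq:coeff_4th} that appears inside the proof of Lemma~\ref{lem:sum_roots_of_unity}. From the opening of the proof of Proposition~\ref{prop_p48sum}, the nonidentity elements of $\Gamma$ with $|\gamma_{11}|=1$ are precisely the diagonal matrices $\gamma=u^jY^k$ for $0\le j\le p-1$, $0\le k\le 7$, $(j,k)\neq(0,0)$. For each such $\gamma$ one has $\gamma_{11}=u^j a^k$, $\tr\gamma=u^j(a^k+a^{-k})$, $\det\gamma=u^{2j}$, and the algebraic identity
\[
u^j(a^k+a^{-k})-u^{2j}-1 \;=\; -(1-u^ja^k)(1-u^ja^{-k})
\]
collapses the term $4(\tr\gamma-\det\gamma-1)/(1-\gamma_{11})$ inside $\psi(\gamma)$, exactly as in the proof of Proposition~\ref{prop:gamma2-sum}. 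This yields the clean form
\[
\psi(u^jY^k)\;=\;\frac{-3u^{2j}-3u^{3j}a^k+u^{3j}a^{-k}}{(1-u^ja^k)^4}.
\]

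Since $\gcd(p,8)=1$, the map $(j,k)\mapsto \xi:=u^ja^k$ is a bijection from $\{0,\dots,p-1\}\times\{0,\dots,7\}$ onto the group $\mu_{8p}$ of $8p$-th roots of unity. Choosing integers $A,B$ with $A\equiv 1\pmod p$, $A\equiv 0\pmod 8$ and $B:=1-A$, one has $u^j=\xi^A$ and $a^k=\xi^B$ for every $\xi\in\mu_{8p}$, hence
\[
C_2(p)\;=\;-3\,S_{8p}(2A)\;-\;3\,S_{8p}(3A+B)\;+\;S_{8p}(3A-B),\qquad S_N(r):=\sum_{\xi\in\mu_N\setminus\{1\}}\frac{\xi^r}{(1-\xi)^4}.
\]
A short Chinese-Remainder calculation identifies the unique representatives $r_1,r_2,r_3\in\{1,\dots,8p\}$ of $2A,\ 3A+B,\ 3A-B$ as $r_3=4p+3$ (unconditionally), together with $(r_1,r_2)=(6p+2,\,6p+3)$ when $p\equiv 1\pmod 4$ and $(r_1,r_2)=(2p+2,\,2p+3)$ when $p\equiv 3\pmod 4$.

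Substituting each $r_i$ and $N=8p$ into \eqref{eq:coeff_4th} and combining with the weights $-3,-3,1$, the $p^4$ contributions of $N^4/720$, $(r^2-4r+11/3)N^2/24$, $\binom{r-1}{3}N/2$ and $-r^4/24$ conspire to give coefficient $-28/9$; the differing linear-in-$p$ parts of $r_1,r_2$ in the two residue classes shift the cubic-in-$p$ piece by a sign and produce precisely $i^{p+1}\cdot 12p^3$; and the $p^2$ and constant coefficients, which turn out to be insensitive to $p\bmod 4$, evaluate to $5/9$ and $-1/144$ respectively. A reassuring sanity check is that the coefficient of $p^1$ in each $F(8p,r_i)$ vanishes, because in every case one of $r_i-1,r_i-2,r_i-3$ is divisible by $p$, so $\binom{r_i-1}{3}$ has vanishing constant term and the remaining polynomial-in-$r$ pieces cancel; this matches the absence of a linear term in the target formula. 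The main obstacle is the polynomial bookkeeping across this final cancellation; since it is entirely mechanical it can be corroborated by computer algebra, and in the write-up I would record only the consolidated final answer rather than each of the five separate expansions.
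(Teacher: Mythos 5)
Your proposal is correct and follows essentially the same route as the paper: the same simplification of $\psi(u^jY^k)$, the same reindexing over the $8p$-th roots of unity via the Chinese Remainder Theorem (your exponents $2A$, $3A+B$, $3A-B$ are exactly the paper's $r$, $r+1$, $2r-1$ with $r=6p+2$ or $2p+2$ according to $p\bmod 4$), and the same final evaluation through $F(8p,r)$ from \eqref{eq:coeff_4th}. The paper likewise leaves the concluding polynomial substitution to computation and records only the consolidated answer, so there is nothing substantive to add.
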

	
	\begin{proof}
		
		We observe that  \[\psi(u^jY^k)= \frac{u^{2j}}{(1-u^ja^k)^4}\big(-3-3u^ja^k+u^ja^{-k}\big).  \]
		
		Note that $u^ja^k$ runs over all $N:=8p$ roots of unity. Let $0\le r\le N-1$ be the unique remainder modulo $N$ determined by Chinese remainder theorem satisfying $r\equiv 0 \mod 8$ and $r\equiv 2\mod p.$ The solutions to this  congruence equation are precisely \[r=\begin{cases}
			6p+2 &  \text{when }p \equiv 1\mod 4 \\
			2p+2 & \text{when }p \equiv 3\mod 4
		\end{cases}. \]
		Let $\xi=u^ja^k$, then the choice of $r$ implies $\xi^r=u^{2j}$ and \[\psi(u^jY^k)=\frac{\xi^{r}}{(1-\xi)^4}(-3-3\xi+\xi^{r-1}). \]
		Thus we may obtain
		\begin{align*}
			C_2(p)=\sum_{\xi\ne 1}\frac{-3\xi^{r}-3\xi^{r+1}+\xi^{2r-1} }{(1-\xi)^4},
		\end{align*}
		where $\xi$ runs over $N^{th}$ roots of unity.
		
		In Lemma \ref{lem:sum_roots_of_unity}, we obtained an explicit polynomial expression in \eqref{eq:coeff_4th} for the sum \[F(N,r)=\sum_{\xi\ne 1}\frac{\xi^r}{(1-\xi)^4},\]
		where $1\le r\le N$. 
		
		When $p\equiv 1\mod 4$, we substitute $N=8p$ and $r\equiv 6p+2\mod p$ to obtain
		\begin{align*}
			C_2(p)&= - 3F(8p,6p+2)-3 F(8p,6p+3)+ F(8p,4p+3)\\
			&= -\frac{28}{9} p^{4} - 12 p^{3} + \frac{5}{9} p^{2} - \frac{1}{144}.
		\end{align*}
		In the last term, we used $2r-1\equiv 4p+3 \mod N$.
		
		When $p\equiv 3\mod 4$, we substitute $r\equiv 2p+2\mod p$ to obtain 
		\begin{align*}
			C_2(p)&= -\frac{28}{9} p^{4} + 12 p^{3} + \frac{5}{9} p^{2} - \frac{1}{144}.
		\end{align*}
		Notice that this expression has opposite sign for the term $12p^3$ compared to previous case.
	\end{proof}
	
	\begin{proposition}\label{prop_p120sum}
		Fix positive integers $p$ such that $gcd(p,120)=1$. Let $\Gamma\cong\mathbb{Z}/p\mathbb{Z}\times P_{120}$ be the matrix group generated by $u\cdot id$, $X$ and $Y$, where $u$ is a primitive $p^{th}$ root of unity and $X$ and $Y$ are described in Proposition \ref{repgamma34}. Then 
		\begin{align}
			C(\Gamma)=\sum_{\gamma \in {\Gamma\backslash \{id\} }}\psi(\gamma)\ne 0,
		\end{align}
		where 
		$\psi(\gamma)$ is defined in \eqref{eq:psi_def}.
	\end{proposition}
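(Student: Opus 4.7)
The strategy follows closely the approach of Proposition \ref{prop_p48sum}. I would begin by splitting
\begin{align*}
C(\Gamma) = C_1(p) + C_2(p), \qquad C_1(p) = \sum_{|\gamma_{11}|\ne 1}\psi(\gamma), \quad C_2(p) = \sum_{\substack{\gamma\in\Gamma\setminus\{id\}\\ |\gamma_{11}|=1}}\psi(\gamma),
\end{align*}
and then show that $C_2(p)$ is a polynomial in $p$ of degree exactly $4$, while $|C_1(p)|$ grows only linearly in $p$, so that the two terms cannot cancel for $p$ sufficiently large; the remaining finitely many values of $p$ will then be treated by direct computation (analogous to Appendix \ref{appendix}).

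For the computation of $C_2(p)$, I would argue (using the representation in Proposition \ref{repgamma34} with $a$ a primitive $10^{\text{th}}$ root of unity) that the elements $\gamma\in\Gamma$ with $|\gamma_{11}|=1$ are exactly the $10p$ elements $u^j Y^k$ with $0\le j<p$ and $0\le k<10$. Expanding $\psi(u^jY^k)$ exactly as in Proposition \ref{prop_p48sum}, one gets
\[\psi(u^jY^k)=\frac{\xi^r}{(1-\xi)^4}\bigl(-3-3\xi+\xi^{r-1}\bigr),\]
where $\xi=u^ja^k$ ranges over the $N=10p$ th roots of unity and $r$ is the unique residue modulo $N$ determined by the Chinese Remainder Theorem with $r\equiv 0\pmod{10}$ and $r\equiv 2\pmod p$ (which exists since $\gcd(p,120)=1$). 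Plugging this into the explicit formula for $F(N,r)$ from Lemma \ref{lem:sum_roots_of_unity} yields $C_2(p)$ as a concrete quartic polynomial in $p$ (with coefficient structure depending on $p\bmod 4$, as occurred for $P_{48}$); crucially, the leading $p^4$ coefficient is a nonzero rational.

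For $C_1(p)$, I would first identify an orbit of $8p$ or $10p$ anti-diagonal elements (e.g.\ of the form $u^j Y^k(Y^sX)^2$ for an appropriate $s$) whose $\psi$-values sum to zero by the same mechanism as in Lemma \ref{lem:C_1(p)}: for anti-diagonal $\gamma$, one has $\gamma_{11}=\tr\gamma=0$ and the resulting expression, summed over $j$, vanishes because $\gcd(p,120)=1$. For the remaining finitely many (independent of $p$) cosets of elements with $0<|\gamma_{11}|<1$, I would use Sage to compute $\max_{\gamma}|\gamma_{11}|\le |c|=1/(2\sin(\pi/5))<1$, and then the same triangle-inequality estimate
\[|\psi(\gamma)|\le \frac{7+6|c|}{(1-|c|)^4}\]
gives a bound $|C_1(p)|\le K\cdot p$ for an explicit constant $K$.

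The main obstacle is twofold. First, the bound $|c|=1/(2\sin(\pi/5))\approx 0.851$ is substantially closer to $1$ than the $1/\sqrt 2$ encountered for $P_{48}$, so the constant $K$ is considerably larger, and the crossover value of $p$ beyond which $|C_2(p)|>|C_1(p)|$ will be larger; this enlarges the range of small $p$ that must be verified by direct computer calculation. Second, one must carefully enumerate conjugacy patterns in $P_{120}$ to ensure that all anti-diagonal (zero-contributing) elements are correctly identified and that the remaining set used in the triangle-inequality bound is covered; this bookkeeping is the most error-prone step but involves no new ideas beyond those already used for $P_{48}$.
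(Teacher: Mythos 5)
Your proposal is correct and follows essentially the same route as the paper: the same splitting into $C_1(p)$ and $C_2(p)$, the same identification of the $10p$ elements $u^jY^k$ with $|\gamma_{11}|=1$, the same reduction of $C_2(p)$ to the roots-of-unity sum $F(10p,r)$ with $r\equiv 0\pmod{10}$, $r\equiv 2\pmod p$, and the same triangle-inequality bound $|C_1(p)|\le 110p\,(7+6|c|)/(1-|c|)^4$ with $|c|=(2\sin(\pi/5))^{-1}$, followed by computer verification of the finitely many small $p$ (the paper checks $p\le 46$). The only minor slip is that the case structure of $C_2(p)$ depends on $p\bmod 5$ rather than $p\bmod 4$, which does not affect the argument since the leading coefficient of $p^4$ is nonzero in every case.
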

	\begin{proof}
		A careful analysis of the representation of the groups given in Proposition \ref{repgamma34} shows that the elements $\gamma \in \Gamma$ for which $|\gamma_{11}|=1$ are precisely the $10p$ elements of the form $\{u^jY^k\}$ where $0\le k<10$ and $0\le j\le p-1$. We split $C(\Gamma)$ as: 
		\begin{align*}
			C(\Gamma)=C_1(p)+ C_2(p),
		\end{align*}
		where 
		\begin{align*}\label{c1c2def}
			C_1(p)=\sum_{|\gamma_{11}| \ne 1}\psi(\gamma) \hspace{1cm}\text{and} \hspace{1cm}
			C_2(p)=\sum_{\substack{\gamma \in \Gamma\backslash \{id\} \\ |\gamma_{11}|= 1} }\psi(\gamma).
		\end{align*}
		Using the two Lemmas below, we observe that for $p> 46$ we have
		\begin{align*}
			|C(\Gamma)|> ||C_2(p)|-|C_1(p)||> 0.
		\end{align*} 
		For $1\le p\le 46$, we explicitly calculate $C(\Gamma)$ using computer, please refer to the Appendix \ref{appendix}. 
		
	\end{proof}

	\begin{lemma}
		For $p\ge 2$ and $gcd(p,120)=1$, we have 
		\begin{align*}
			|C_1(p)| \leq 2.68  \times 10^6 \cdot p.
		\end{align*}
	\end{lemma}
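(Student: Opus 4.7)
The plan is to mirror the argument of the preceding lemma (the $\mathbb{Z}/p\mathbb{Z}\times P_{48}$ case), with only the numerical constants updated to reflect the $P_{120}$ representation in Proposition \ref{repgamma34}. There $a$ is a primitive $10^{\text{th}}$ root of unity, so $c=1/(a-\bar a)$ has modulus $|c|=1/(2\sin(\pi/5))\approx 0.8507$.

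First I would peel off from $C_1(p)$ a family of anti-diagonal elements whose contribution vanishes under the $j$-sum. The analogue of the element $(Y^2X)^2$ used for $P_{48}$ is some specific short word in $X,Y$ whose matrix representation is anti-diagonal; this can be located by a direct calculation from the presentation. For any such $\gamma=u^j\cdot(\text{fixed anti-diagonal element})$, we have $\gamma_{11}=\tr\gamma=0$ and $\det\gamma=u^{2j}\cdot(\pm 1)$, so
\[ \psi(\gamma)=u^{2j}\bigl(1+4(\det\gamma+1)\bigr), \]
and summing over $0\le j<p$ yields zero because $u$ is a primitive $p^{\text{th}}$ root of unity and $\gcd(p,120)=1$.

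For the remaining elements $\gamma$ with $0<|\gamma_{11}|<1$, I would parametrize $\det\gamma=u^{2j}$, $\gamma_{11}=u^j\alpha$, and $\tr\gamma=u^j(\alpha+\bar\alpha)$, where $\alpha$ depends only on the $P_{120}$-factor. A Sage enumeration of the $120$ elements of $P_{120}$ would confirm the uniform bound $|\alpha|\le|c|$ (just as it did for $P_{48}$). The same triangle inequality manipulation as before, exploiting that $(u^j\bar\alpha-u^{2j})/(1-u^j\alpha)$ has unit modulus, then yields
\[ |\psi(\gamma)|\le\frac{7+6|c|}{(1-|c|)^4}. \]
Multiplying by the count of surviving elements (a modest multiple of $p$, no larger than $110p$) and plugging in $|c|=1/(2\sin(\pi/5))$ gives the stated constant $2.68\times 10^6\cdot p$.

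The main obstacle will be the computational bookkeeping rather than any new idea: explicit identification of the anti-diagonal family inside $P_{120}$ whose $j$-sum cancels, together with the computer-aided verification that every non-diagonal element of $P_{120}$ has $(1,1)$-entry of modulus at most $|c|$. Both steps are conceptually identical to the $P_{48}$ lemma but are heavier because $|P_{120}|=120$, and the resulting constant is correspondingly larger.
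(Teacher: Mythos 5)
Your proof is correct and essentially identical to the paper's: the paper likewise verifies via Sage that $|\gamma_{11}|\le|c|$ whenever $|\gamma_{11}|\ne 1$, applies the same triangle-inequality bound $|\psi(\gamma)|\le(7+6|c|)/(1-|c|)^4$ from the $P_{48}$ lemma, and multiplies by the count $110p$ with $|c|=(2\sin(\pi/5))^{-1}$ to get $110p\,(7+6|c|)/(1-|c|)^4\le 2.68\times10^6\,p$. Your preliminary step of peeling off an anti-diagonal family with vanishing $j$-sum is harmless but unnecessary here (and such a family need not exist in this representation of $P_{120}$, whose order-$4$ elements only have purely imaginary, not necessarily zero, $(1,1)$-entry); the paper simply bounds all $110p$ terms directly, which is exactly your worst case.
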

	\begin{proof}
		We use Sage to observe that $|\gamma_{11}|\le |c|$ for $\gamma\in \Gamma$ such that $|\gamma_{11}|\ne 1$. The constant $c$ is described in Proposition \ref{repgamma34}. We now find an upper bound for the $110p$ terms appearing in $C_1(p)$ for which $|\gamma_{11}|\ne 1$. The bound on $\psi(\gamma)$ for such $\gamma$ is obtained in exactly the same way as in the proof of Lemma \ref{lem:C_1(p)}. In particular, we have
		\begin{align*}
			|C_1(p)| \leq 110p \frac{7+6|c|}{(1-|c|)^4}\le 2.68 \times 10^6 p.
		\end{align*}
		Here we used that $|c|=(2\sin(\pi/5))^{-1}$.
	\end{proof}
	
	\begin{lemma}
		We have
		\begin{align*}
			C_2(p)=\begin{cases}
				-\frac{265}{9} \, p^{4} - 20 \, p^{3} + \frac{25}{18} \, p^{2} - \frac{1}{144} & p\equiv 1 \mod 5\\
				\frac{575}{9} \, p^{4} + 20 \, p^{3} - \frac{35}{18} \, p^{2} - \frac{1}{144} & p\equiv 2 \mod 5\\
				\frac{455}{9} \, p^{4} - 8 \, p^{3} - \frac{11}{18} \, p^{2} - \frac{1}{144}& p\equiv 3 \mod 5 \\
				-\frac{265}{9} \, p^{4} - 20 \, p^{3} + \frac{25}{18} \, p^{2} - \frac{1}{144}&p\equiv 4 \mod 5
			\end{cases}.
		\end{align*}
	\end{lemma}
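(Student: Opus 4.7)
The plan is to mimic verbatim the strategy used for $C_2(p)$ in the $P_{48}$ case. By Proposition \ref{repgamma34} applied to $P_{120}$, the matrix $Y$ is diagonal with entry $a$ a primitive $10^{\text{th}}$ root of unity, so the elements $\gamma \in \Gamma$ with $|\gamma_{11}|=1$ are precisely the $10p$ diagonal matrices $\gamma = u^j Y^k$, with $0\le j<p$ and $0\le k<10$. For these, $\gamma_{11}=u^j a^k$, $\det\gamma = u^{2j}$, and $\tr\gamma = u^j(a^k+a^{-k})$, and the identical algebraic simplification carried out in the $Q_{8n}$ proof yields
\[
\psi(u^j Y^k) = \frac{u^{2j}}{(1-u^j a^k)^4}\bigl(-3 - 3 u^j a^k + u^j a^{-k}\bigr).
\]

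Since $\gcd(p,10)=1$, the map $(j,k)\mapsto u^j a^k$ is a bijection onto the set of $N^{\text{th}}$ roots of unity, with $N:=10p$. I will choose by the Chinese Remainder Theorem the unique $r$ with $0\le r<N$ satisfying $r\equiv 0\pmod{10}$ and $r\equiv 2\pmod{p}$; then for $\xi=u^j a^k$ we have $\xi^r=u^{2j}$ and $\xi^{r-1}=u^j a^{-k}$, so
\[
C_2(p)=\sum_{\xi\ne 1}\frac{-3\xi^{r}-3\xi^{r+1}+\xi^{2r-1}}{(1-\xi)^4}
      =-3F(10p,r)-3F(10p,r{+}1)+F\bigl(10p,\,2r{-}1\bmod 10p\bigr),
\]
where $F(N,r)$ is the explicit degree-$4$ polynomial of \eqref{eq:coeff_4th} supplied by Lemma \ref{lem:sum_roots_of_unity}.

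The next step is to resolve $r$ in each residue class $p\bmod 5\in\{1,2,3,4\}$ (recall $\gcd(p,120)=1$ forces $p$ odd and coprime to $5$). Writing $r=10k$, the condition $10k\equiv 2\pmod p$ is $5k\equiv 1\pmod p$, which gives the values
\[
r=\begin{cases} 8p+2,& p\equiv 1\pmod 5,\\ 4p+2,& p\equiv 2\pmod 5,\\ 6p+2,& p\equiv 3\pmod 5,\\ 2p+2,& p\equiv 4\pmod 5,\end{cases}
\]
and the corresponding reductions $2r-1\bmod 10p$ equal $6p+3$, $8p+3$, $2p+3$, $4p+3$ respectively. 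Substituting each triple $(r,r+1,2r-1\bmod 10p)$ together with $N=10p$ into the polynomial $F(N,\cdot)$ and taking the stated linear combination produces the four claimed expressions for $C_2(p)$.

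The only real obstacle is bookkeeping: substituting these affine-in-$p$ values of $r$ into the quartic $F(10p,\cdot)$ and collecting terms is lengthy and error-prone by hand, so I plan to execute the polynomial simplification in a computer algebra system (for instance Sage or SymPy) and then verify the resulting closed forms against direct numerical computation of $C_2(p)$ for one representative prime in each residue class, e.g.\ $p=11,7,13,19$. No new ideas are needed beyond those already present in the $P_{48}$ lemma; the statement is essentially a four-case bookkeeping analogue.
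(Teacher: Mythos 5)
Your proposal is correct and follows essentially the same route as the paper: identify the $10p$ diagonal elements $u^jY^k$, reindex the sum over the $N=10p$ roots of unity via the CRT exponent $r$ (your four values of $r$ and of $2r-1 \bmod 10p$ match the paper's), and evaluate $-3F(10p,r)-3F(10p,r+1)+F(10p,2r-1)$ using the polynomial from Lemma \ref{lem:sum_roots_of_unity}. The paper likewise carries out only the $p\equiv 1 \bmod 5$ substitution explicitly and treats the remaining cases as identical bookkeeping.
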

	\begin{proof}
		
		We observe that  \[\psi(u^jY^k)= \frac{u^{2j}}{(1-u^ja^k)^4}\big(-3-3u^ja^k+u^ja^{-k}\big).  \]
		
		Note that $u^ja^k$ runs over all $N:=10p$ roots of unity. Let $0\le r\le N-1$ be the unique remainder modulo $N$ determined by Chinese remainder theorem satisfying $r\equiv 0 \mod 10$ and $r\equiv 2\mod p.$ The explicit value of $r$ is given below: \[r=\begin{cases}
			8p+2 &  \text{when }p \equiv 1\mod 5 \\
			4p+2 & \text{when }p \equiv 2\mod 5\\
			6p+2 &  \text{when }p \equiv 3\mod 5 \\
			2p+2 & \text{when }p \equiv 4\mod 5
		\end{cases}. \]
		Let $\xi=u^ja^k$, then the choice of $r$ implies $\xi^r=u^{2j}$ and \[\psi(u^jY^k)=\frac{\xi^{r}}{(1-\xi)^4}(-3-3\xi+\xi^{r-1}). \]
		Thus we may obtain
		\begin{align*}
			C_2(p)=\sum_{\xi\ne 1}\frac{-3\xi^{r}-3\xi^{r+1}+\xi^{2r-1} }{(1-\xi)^4},
		\end{align*}
		where $\xi$ runs over $N^{th}$ roots of unity.
		
		In Lemma \ref{lem:sum_roots_of_unity}, we obtained an explicit polynomial expression in \eqref{eq:coeff_4th} for the sum \[F(N,r)=\sum_{\xi\ne 1}\frac{\xi^r}{(1-\xi)^4},\]
		where $1\le r\le N$. 
		
		When $p\equiv 1\mod 5$, we substitute $N=10p$ and $r\equiv 8p+2\mod p$ to obtain
		\begin{align*}
			C_2(p)&= - 3F(10p,8p+2)-3 F(10p,8p+3)+ F(10p,6p+3)\\
			&= -\frac{265}{9} \, p^{4} - 20 \, p^{3} + \frac{25}{18} \, p^{2} - \frac{1}{144}.
		\end{align*}
		In the last term, we used $2r-1\equiv 6p+3 \mod N$.
		
		The remaining cases are dealt exactly in the same way.
	\end{proof}

	\section{The groups $P'_{8.3^m}$}
	\begin{proposition}\label{propp'_83powksum}
		Fix positive integer $p$ such that $gcd(p,8\cdot 3^m )=1$. Let $\Gamma\cong\mathbb{Z}/p\mathbb{Z}\times P'_{8.3^m}$ be the matrix group generated by $u\cdot id$, $X$, $Y$ and $Z$, where $u$ is a primitive $p^{th}$ root of unity and $X, \ Y, \ Z$ are described in Proposition \ref{repgamma6}. Then 
		\begin{align*}
			C(\Gamma)=\sum_{\gamma \in {\Gamma\backslash \{id\} }}\psi(\gamma)\ne 0,
		\end{align*}
		where 
		$\psi(\gamma)$ is defined in \eqref{eq:psi_def}
	\end{proposition}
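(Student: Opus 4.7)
The plan is to mimic the strategy of Propositions \ref{prop_p48sum} and \ref{prop_p120sum}: identify the subgroup $\Gamma_0 \subset \Gamma$ of elements with $|\gamma_{11}| = 1$, split $C(\Gamma) = C_1(p) + C_2(p)$ with $C_2$ summing $\psi(\gamma)$ over $\Gamma_0 \setminus \{id\}$, compute $C_2(p)$ as an explicit polynomial in $p$ via Lemma \ref{lem:sum_roots_of_unity}, and bound $|C_1(p)|$ so that $|C_2(p)| > |C_1(p)|$ for $p$ beyond an explicit threshold.

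First I would identify $\Gamma_0$. By the relations of Proposition \ref{repgamma6}, every element of $\Gamma$ can be written uniquely as $u^j q Z^s$ with $0 \le j < p$, $q \in Q_8 = \langle X, Y \rangle$, and $0 \le s < 3^m$. Diagonalising $Z$ (whose characteristic polynomial is $\lambda^2 + \beta \lambda + \beta^2$) one finds its eigenvalues to be $\beta\omega$ and $\beta\omega^2$ with $\omega = e^{2\pi i/3}$; hence $Z^s = \beta^s I$ exactly when $3 \mid s$, while for $3 \nmid s$ a short spectral computation together with the unitarity relation $|(Z^s)_{11}|^2 + |(Z^s)_{21}|^2 = 1$ gives $|(Z^s)_{11}| = |(Z^s)_{21}| = 1/\sqrt{2}$. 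Since left-multiplication by $q \in Q_8$ permutes entries up to factors of unit modulus, $|\gamma_{11}| = 1$ holds precisely when $3 \mid s$ and $q$ is diagonal; that is, $\Gamma_0 := \langle u \cdot I,\ Z^3,\ Y\rangle$, a diagonal abelian subgroup of order $4 p \cdot 3^{m-1}$.

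Next I would compute $C_2(p)$. Writing $\gamma = \mu Y^\ell$ with $\mu = u^j \beta^{3k}$ a scalar and $\xi := \gamma_{11} = \mu\, i^\ell$, the map $(j, k, \ell) \mapsto \xi$ is a bijection onto the $N$-th roots of unity, where $N = 4 p \cdot 3^{m-1}$ (the orders $p$, $3^{m-1}$, $4$ are pairwise coprime under the hypothesis $\gcd(p, 8\cdot 3^m) = 1$). Since $N/2 = 2 p \cdot 3^{m-1}$ is twice an odd number, one checks $\xi^{N/2} = (-1)^\ell$, which yields the uniform formula
\begin{align*}
\psi(\gamma) \;=\; \frac{\xi^r}{(1-\xi)^4}\bigl(-3 - 3\xi + \xi^{r-1}\bigr), \qquad r := \tfrac{N}{2} + 2.
\end{align*}
Consequently $C_2(p) = -3\, F(N, r) - 3\, F(N, r+1) + F(N, 2r - 1)$, and Lemma \ref{lem:sum_roots_of_unity} expresses this as a polynomial in $p$ of degree $4$ with leading term of size $\tfrac{20}{9}\, p^{4}\, 3^{4(m-1)}$.

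For the bound on $|C_1(p)|$, every $\gamma \in \Gamma \setminus \Gamma_0$ has $|\gamma_{11}| \in \{0, 1/\sqrt{2}\}$, and writing $Z = \beta\, \tilde Z$ with $\tilde Z \in SU(2)$ allows one to express $\gamma = (u^j \beta^s)\,\tilde\gamma$ with $\tilde\gamma \in SU(2)$, so that exactly the triangle-inequality estimate of Lemma \ref{lem:C_1(p)} applies to give $|\psi(\gamma)| \le (7 + 6/\sqrt{2})/(1 - 1/\sqrt{2})^{4}$. Summing over the at most $8 p \cdot 3^m$ elements of $\Gamma \setminus \Gamma_0$ yields $|C_1(p)| \le K \cdot p \cdot 3^m$ for an explicit constant $K$. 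Since $|C_2(p)|$ is quartic in $p \cdot 3^{m-1}$ while $|C_1(p)|$ is linear in $p \cdot 3^m$, there is an explicit threshold $p_0(m)$ — decreasing in $m$ — beyond which $|C_2(p)| > |C_1(p)|$, and the finitely many remaining pairs $(p, m)$ are verified by direct computation as in the Appendix. The main technical obstacle will be the careful polynomial expansion of $C_2(p)$ under the substitution $N = 4 p \cdot 3^{m-1},\ r = 2 p \cdot 3^{m-1} + 2$ into Lemma \ref{lem:sum_roots_of_unity}, together with organising the small-case computer check, which here has the extra wrinkle of depending on the second parameter $m$.
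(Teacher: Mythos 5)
Your proposal is correct and follows essentially the same route as the paper: the same split $C(\Gamma)=C_1+C_2$ according to whether $|\gamma_{11}|=1$, the same identification of that set as the $4p\cdot3^{m-1}$ scalar multiples of powers of $Y$, the same reduction of $C_2$ to $-3F(N,r)-3F(N,r+1)+F(N,2r-1)$ with $N=4p\cdot3^{m-1}$ and $r=2p\cdot3^{m-1}+2$ via Lemma \ref{lem:sum_roots_of_unity}, the same triangle-inequality bound $|C_1|=O(p\cdot3^{m})$ using $|\gamma_{11}|\le1/\sqrt2$ and the $SU(2)$ normalization, and the same finite computer check for small $p\cdot3^{m-1}$. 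The only differences are cosmetic (a slightly cruder count of the off-diagonal elements, and a more explicit justification that $Z=\beta\tilde{Z}$ with $\tilde{Z}\in SU(2)$).
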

	
	\begin{proof}
		First a careful analysis from the representation of the groups given in Section \ref{repsubsection}, shows that elements of $\Gamma$ can be uniquely written as $u^k\cdot Z^{i_1}\cdot X^{i_2}\cdot Y^{i_3}$ where $k \in \{1,2, \dots ,p\}, i_1 \in \{1,2, \dots ,3^m\}, i_2 \in \{1,2\}$ and $i_3 \in \{1,2,3,4\}$. Then looking at the representation of the group $P'_{8.3^m}$, we note that for $\gamma \in \Gamma$ such that  $|\gamma_{11}| = 1$, we must have $i_1 \in 3 \mathbb{N}$ and $i_2=2$. Then just like last proposition, we can split $C(\Gamma)$ as: 
		\begin{align*}
			C(\Gamma)=C_1(p,m)+ C_2(p,m),
		\end{align*}
		where 
		\begin{align*}\label{c1c2def}
			C_1(p,m)=\sum_{|\gamma_{11}| \ne 1}\psi(\gamma) \hspace{1cm}\text{and} \hspace{1cm}
			C_2(p,m)=\sum_{\substack{\gamma \in \Gamma\backslash \{id\} \\ |\gamma_{11}|= 1} }\psi(\gamma)
		\end{align*}
		Using the two Lemmas below, we observe that for $\tilde{p}=p \cdot 3^{m-1}\geq 24$, we have
		\begin{align*}
			|C(\Gamma)|> ||C_2(p,m)|-|C_1(p,m)||> 0.
		\end{align*} 
		For $1\le \tilde{p} \le 23$, we explicitly calculate $C(\Gamma)$ using computer. Please refer to the Appendix \ref{appendix}. 
		
	\end{proof}

	\begin{lemma}
		For $m, p \in \mathbb{N}$ and $gcd(p,8 \cdot 3^m)=1$, we have 
		\begin{align*}
			|C_1(p,m)| \leq 80(135 \sqrt{2}+ 191)  \cdot 3^{m-1} \cdot p 
		\end{align*}
	\end{lemma}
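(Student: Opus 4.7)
The plan is to mirror Lemma \ref{lem:C_1(p)}: count the elements contributing to $C_1(p,m)$, establish a uniform bound $|\gamma_{11}| \leq |c| < 1$ on those elements, then invoke the universal estimate $|\psi(\gamma)| \leq (7+6|c|)/(1-|c|)^4$ that applies to any $\gamma \in U(2)$ with $|\gamma_{11}| \leq |c|$. I would parameterize elements as $\gamma = u^k Z^{i_1} X^{i_2} Y^{i_3}$ as in Proposition \ref{propp'_83powksum}. Because $Z^3 = \beta^3 \cdot id$ (Proposition \ref{repgamma6}) and $X^2 = -id$, the relation $|\gamma_{11}| = 1$ holds precisely when $3 \mid i_1$ and $i_2 = 2$, which accounts for $4p \cdot 3^{m-1}$ elements; the remaining $20p \cdot 3^{m-1} = 8p \cdot 3^m - 4p \cdot 3^{m-1}$ elements contribute to $C_1(p,m)$.

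For the uniform bound I would take $|c| = 1/\sqrt{2}$. The explicit matrix $Z$ in Proposition \ref{repgamma6} has all four entries of modulus $|\beta|/|i-1| = 1/\sqrt{2}$, and a direct calculation shows the same for $Z^2$. Hence whenever $3 \nmid i_1$, $Z^{i_1}$ is a unit-modulus scalar multiple of $Z$ or $Z^2$, and all of its entries still have modulus $1/\sqrt{2}$. Since $Y^{i_3}$ is diagonal and $X$ is antidiagonal, right multiplication by $X^{i_2} Y^{i_3}$ only permutes the columns of $Z^{i_1}$ up to unit-modulus scalars, so $|(Z^{i_1} X^{i_2} Y^{i_3})_{11}| = 1/\sqrt{2}$. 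When $3 \mid i_1$ but $i_2 = 1$, $\gamma$ is a scalar times the antidiagonal matrix $XY^{i_3}$ and $\gamma_{11} = 0$. In all remaining cases $|\gamma_{11}| \leq 1/\sqrt{2}$.

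For the per-element bound, any $\gamma \in U(2)$ with $\det \gamma = \mu$, $\gamma_{11} = a$, $\tr \gamma = a + \mu\bar a$ satisfies (after rearrangement as in Lemma \ref{lem:C_1(p)})
\[
\psi(\gamma) = \frac{\mu}{(1-a)^4}\left(-3 - 3a - 3\mu\bar a - \frac{4\mu(1-\bar a)}{1-a}\right).
\]
Since $|\mu(1-\bar a)/(1-a)| = 1$ and the expression $(7+6t)/(1-t)^4$ is monotone increasing on $[0,1)$, the triangle inequality yields $|\psi(\gamma)| \leq (7+6|c|)/(1-|c|)^4 = 4(135\sqrt{2}+191)$ at $|c| = 1/\sqrt{2}$. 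Multiplying by the count $20p \cdot 3^{m-1}$ produces the stated bound. The main subtlety is the case analysis underlying the uniform bound $|\gamma_{11}| \leq 1/\sqrt{2}$, but this collapses once one notes that all entries of both $Z$ and $Z^2$ have modulus $1/\sqrt{2}$, combined with the column-permutation action of $X^{i_2} Y^{i_3}$.
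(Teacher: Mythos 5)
Your proposal is correct and follows essentially the same route as the paper: count the $20p\cdot 3^{m-1}$ elements with $|\gamma_{11}|\ne 1$, bound $|\gamma_{11}|$ by $1/\sqrt{2}$, and apply the per-element estimate $|\psi(\gamma)|\le (7+6|c|)/(1-|c|)^4$ from Lemma \ref{lem:C_1(p)}. The only difference is that you spell out the case analysis (moduli of the entries of $Z$ and $Z^2$, and the column-permutation effect of $X^{i_2}Y^{i_3}$) that the paper compresses into ``using the explicit description of the matrices, we observe.''
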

	\begin{proof}
		Recall that any element $\gamma\in \Gamma$ can be represented as $\gamma=u^k Z^{i_1} X^{i_2} Y^{i_3}$. Using the explicit description of matrices $X$, $Y$ and $Z$, we observe that if $|\gamma_{11}| \neq 1$, then $|\gamma_{11}| \leq \frac{1}{\sqrt{2}}$. For $\gamma\in \Gamma$, we have $\det\gamma=u^{2j}$, $\gamma_{11}=u^j\alpha$ and $\tr \gamma=u^j(\alpha+\bar{\alpha})$ where $|\alpha|\le 1/\sqrt{2}$. Using the bound in the proof of Lemma \ref{lem:C_1(p)}, we have
		\begin{align*}
			|\psi(\gamma)|\le \frac{7+6|\alpha|}{(1-|\alpha|)^4}\le \frac{7+3\sqrt{2}}{(1-1/\sqrt{2})^4}.
		\end{align*}
		Since there are $8p3^m-4p3^{m-1}=20p3^{m-1}$ elements $\gamma$ such that $|\gamma_{11}|\ne 1$, we obtain
		\begin{align*}
			|C_1(p,m)| \leq 20 \cdot 3^{m-1}p\frac{7+3\sqrt{2}}{(1-1/\sqrt{2})^4}= 80(135 \sqrt{2}+ 191)  \cdot 3^{m-1} \cdot p.
		\end{align*}
	\end{proof}
	
	\begin{lemma}
		We have
		\[C_2(p,m)=\sum_{|\gamma_{11}|=1} \psi(\gamma)=\frac{20}{9} \tilde{p}^{4} - \frac{7}{9} \tilde{p}^{2} - \frac{1}{144}, \]
		where $\tilde{p}=p\cdot 3^{m-1}$.
	\end{lemma}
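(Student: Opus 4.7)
The plan is to adapt the strategy used for $P_{48}$ and $P_{120}$: reduce $C_2(p,m)$ to a linear combination of sums of the form $\sum_{\xi}\xi^r/(1-\xi)^4$ over roots of unity, and then invoke Lemma~\ref{lem:sum_roots_of_unity}.

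First I would enumerate the elements $\gamma\in\Gamma\setminus\{\mathrm{id}\}$ with $|\gamma_{11}|=1$. Writing $\gamma = u^k Z^{i_1}X^{i_2}Y^{i_3}$, the condition $|\gamma_{11}|=1$ forces $i_1=3\ell$ (with $\ell\in\{1,\ldots,3^{m-1}\}$) and $i_2=2$. Since $Z^3=\beta^3\cdot\mathrm{id}$ and $X^2=-\mathrm{id}$, every such $\gamma$ is the diagonal matrix $\gamma = -u^k\beta^{3\ell}Y^{i_3}$; this parametrizes all $4p\cdot 3^{m-1}=4\tilde{p}$ candidates (one of which is $\mathrm{id}$).

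Set $\eta=\gamma_{11}$. A direct computation gives $\det\gamma=-\eta^2$ and $\tr\gamma=0$ when $i_3$ is odd, and $\det\gamma=\eta^2$ and $\tr\gamma=2\eta$ when $i_3$ is even. Substituting into \eqref{eq:psi_def} simplifies to
\[
\psi(\gamma)=\frac{3\eta^2+4\eta^3}{(1-\eta)^4}\ \ (i_3\text{ odd}),\qquad \psi(\gamma)=\frac{-3\eta^2-2\eta^3}{(1-\eta)^4}\ \ (i_3\text{ even}).
\]
Since $\gcd(4,\tilde{p})=1$, as $(k,\ell,i_3)$ varies $\eta$ hits each $4\tilde{p}$-th root of unity exactly once, and a short check using $\tilde{p}$ odd shows that $i_3$ odd corresponds to $\eta^{2\tilde{p}}=-1$ while $i_3$ even corresponds to $\eta^{2\tilde{p}}=1$. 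Using the indicators $\mathbf{1}[\eta^{2\tilde{p}}=\pm 1]=(1\pm\eta^{2\tilde{p}})/2$, valid on $4\tilde{p}$-th roots of unity, the two cases merge into
\[
C_2(p,m)=\sum_{\substack{\eta^{4\tilde{p}}=1\\ \eta\ne 1}}\frac{\eta^3-3\eta^{2\tilde{p}+2}-3\eta^{2\tilde{p}+3}}{(1-\eta)^4},
\]
where the exclusion $\eta\ne 1$ precisely removes the identity element.

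Finally, applying Lemma~\ref{lem:sum_roots_of_unity} with $N=4\tilde{p}$ and $r\in\{3,\,2\tilde{p}+2,\,2\tilde{p}+3\}$ (all lying in $\{1,\ldots,N\}$ for $\tilde{p}\ge 2$; for $\tilde{p}=1$ the last exponent is reduced mod $4$), and plugging into the closed-form expression \eqref{eq:coeff_4th}, yields $C_2(p,m)$ as a polynomial in $\tilde{p}$. The main obstacle is purely bookkeeping: each $F(4\tilde{p},r)$ is a six-term polynomial in $\tilde{p}$ of degree four, and verifying that the $\tilde{p}^3$ and $\tilde{p}$ contributions cancel across the three summands while the constant terms combine to exactly $-1/144$ requires careful arithmetic. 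The only conceptual work is the fusion step; everything else is mechanical computation that produces $\frac{20}{9}\tilde{p}^4-\frac{7}{9}\tilde{p}^2-\frac{1}{144}$.
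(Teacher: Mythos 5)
Your proposal is correct and follows essentially the same route as the paper: identify the elements with $|\gamma_{11}|=1$ as the diagonal matrices $u^jZ^{3k}Y^{\ell}$ (up to sign), reduce $C_2(p,m)$ to the single sum $\sum_{\xi\ne 1}\bigl(\xi^{3}-3\xi^{2\tilde p+2}-3\xi^{2\tilde p+3}\bigr)/(1-\xi)^4$ over $4\tilde p$-th roots of unity, and evaluate via Lemma \ref{lem:sum_roots_of_unity}. The only cosmetic difference is that you split by the parity of the exponent of $Y$ and fuse the two cases with the indicators $(1\pm\eta^{2\tilde p})/2$, whereas the paper computes $\psi$ uniformly and identifies the exponent $r=2\tilde p+2$ directly by the Chinese remainder theorem; both land on the identical expression.
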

	\begin{proof}
		Note that $|\gamma_{11}|=1$ if and only if $\gamma=u^jZ^{3k}X^2Y^{\ell}=-u^jZ^{3k}Y^{\ell}=u^jZ^{3k}Y^{\ell+2}$. We observe that $Z^3=\beta^3\cdot id$, where $\beta$ is a primitive $3^{m}$th root of unity. Thus

		\[\psi(u^jZ^{3k}Y^\ell)=\psi(u^j\beta ^{3k}Y^\ell).   \]
		
		Since $u^j\beta^{3k}$ runs over $\tilde{p}=p3^{m-1}$ roots of unity, we may rewrite 
		\[\psi(w^jY^k)\frac{w^{2j}}{(1-w^ja^k)^4}\big(-3-3w^ja^k+w^ja^{-k}\big). \]
		
		Note that $w^ja^k$ runs over all $N:=4\tilde{p}$ roots of unity. Let $0\le r\le N-1$ be the unique remainder modulo $N$ determined by Chinese remainder theorem satisfying $r\equiv 0 \mod 4$ and $r\equiv 2\mod \tilde{p}.$ The solutions to this  congruence equation is precisely $r=2\tilde{p}+2 $.
		
		Let $\xi=w^ja^k$, then the choice of $r$ implies $\xi^r=w^{2j}$ and \[\psi(w^jY^k)=\frac{\xi^{r}}{(1-\xi)^4}(-3-3\xi+\xi^{r-1}). \]
		Thus we may obtain
		\begin{align*}
			C_2(p,m)=\sum_{\xi\ne 1}\frac{-3\xi^{r}-3\xi^{r+1}+\xi^{2r-1} }{(1-\xi)^4}
		\end{align*}
		where $\xi$ runs over $N^{th}$ roots of unity.
		
		Using the formula \eqref{eq:coeff_4th} for $F(N,r):=\sum_{\xi\ne 1}\frac{\xi^r}{(1-\xi)^4}$ where $1\le r\le N$, we obtain
		\begin{align*}
			C_2(p,m)&= - 3F(4\tilde{p},2\tilde{p}+2)-3 F(4\tilde{p},2\tilde{p}+3)+ F(4\tilde{p},3)\\
			&= \frac{20}{9} \tilde{p}^{4} - \frac{7}{9} \tilde{p}^{2} - \frac{1}{144}.
		\end{align*}
		
	\end{proof}
	
	\appendix
	\section{}\label{appendix}
	Here we give tables for the explicit coefficient $C(\Gamma)$ for the groups $\Gamma$ mentioned previously in Propositions \ref{prop_p48sum}, \ref{prop_p120sum} and  \ref{propp'_83powksum}. The values of $C(\Gamma)$ below are obtained numerically using MATLAB and Sage. 
	\subsection{$\Gamma= P_{48} \times \mathbb{Z}/p$:}
	Here we give the table of $C(\Gamma)$ for $1 \leq p \leq 26$, with $gcd(p,48)=1$. 
	\begin{center}
		\begin{tabular}{|M{5cm}|M{8cm}|}
			\hline
			Values of $p$  & Values of $C(\Gamma)$\\ 
			\hline
			1          & -5968.348737366    \\ \hline
			5              & -17682.290187324    \\ \hline
			7        &  -5491.7044210122  \\ \hline
			11              & -29599.5039863668  \\ \hline
			13              &  -116823.112487798 \\ \hline
			17            &  -318950.767090781 \\ \hline
			19            &  -322934.584891557  \\ \hline
			13           &   -724318.562730527   \\ \hline
			25          &  -1402436.345364025  \\ \hline	
		\end{tabular}
	\end{center}
	
	\subsection{$\Gamma= P_{120} \times \mathbb{Z}/p$:}
	Here we give the table of $C(\Gamma)$ for $1 \leq p \leq 46$, with $gcd(p,120)=1$.  
	\begin{center}
		\begin{tabular}{|M{5cm}|M{8cm}|}
			\hline
			Values of $p$  & Values of $C(\Gamma)$\\ 
			\hline
			1          &  2833.2070524   \\ \hline
			7              & 179826.7366816    \\ \hline
			11        &  -792274.9699035  \\ \hline
			13              & 1807438.9684806  \\ \hline
			17              & 5745222.85120843  \\ \hline
			19            &  -3699419.76217178 \\ \hline
			23            & 17634781.2649327  \\ \hline
			29           &  -20336547.9959923   \\ \hline
			31          &  -27809085.4970539  \\ \hline
			37          &  120775875.7114752  \\ \hline
			41          &  -84581711.0963804  \\ \hline
			43          &  216829661.9498101  \\ \hline
		\end{tabular}
	\end{center}
	
	\subsection{$\Gamma= P'_{8.3^k} \times \mathbb{Z}/p$:}
	Here we give the table of $C(\Gamma)$ for $1 \leq \tilde{p} \leq 23$, where $\tilde{p}=p \cdot 3^{m-1}$ and $gcd(p,8.3^m)=1$. 
	
	\begin{center}
		\begin{tabular}{|M{2.5cm}|M{5cm}|M{8cm}|}
			\hline
			Values of $m$ & Values of $p$ & Values of $C(\Gamma)$\\ 
			\hline
			1  & 1                & -484.8486298220    \\ \hline
			1  & 5                & 5030.1210024879     \\ \hline
			1  & 7               &  3676.7952801659  \\ \hline
			1  & 11               & 31610.0383198645  \\ \hline
			1  & 13              &  63761.5753483247 \\ \hline
			1  & 17              &  185455.4886479516  \\ \hline
			1  & 19               &  289290.8713510202  \\ \hline
			1  & 23               & 621453.3057542715 \\ \hline
			2  & 1               &  -1097.0507252631  \\ \hline
			2  & 5               &  112137.4453024487 \\ \hline
			2  & 7               &  431848.4432457592 \\ \hline
			3  & 1               &  16148.9146758443 \\ \hline
		\end{tabular}
	\end{center}
	
	\bibliography{biblioscv}
	\bibliographystyle{amsalpha.bst}
\end{document}